\theoremstyle{plain}
\newtheorem{theorem}{Theorem}[section]                
\newtheorem{proposition}[theorem]{Proposition}       
\newtheorem{corollary}[theorem]{Corollary}
\newtheorem{definition}[theorem]{Definition}
\newtheorem{example}[theorem]{Example}
\newtheorem{remark}[theorem]{Remark}
\newcommand{\rba}[3]{\mathfrak{M}_{#1} \left( {#2},{#3} \right) }         
\newcommand{\srba}[3]{\mathfrak{M}^{S}_{#1} \left( {#2},{#3} \right) }  
\begin{document}

\begin{frontmatter}

\title{Manifold-valued subdivision schemes \\ based on geodesic inductive averaging}  


\author[mymainaddress]{Nira Dyn}
\ead{niradyn@post.tau.ac.il}

\author[mysecondaryaddress]{Nir Sharon\corref{mycorrespondingauthor}}
\cortext[mycorrespondingauthor]{Corresponding author}
\ead{nsharon@math.princeton.edu}

\address[mymainaddress]{School of Mathematical Sciences, Tel Aviv University, Tel Aviv, Israel}
\address[mysecondaryaddress]{The Program in Applied and Computational Mathematics, Princeton University, Princeton NJ, USA}

\begin{abstract} 
Subdivision schemes have become an important tool for approximation of manifold-valued functions. In this paper, we describe a construction of manifold-valued subdivision schemes for geodesically complete manifolds. Our construction is based upon the adaptation of linear subdivision schemes using the notion of repeated binary averaging, where as a repeated binary average we propose to use the geodesic inductive mean. We derive conditions on the adapted schemes which guarantee convergence from any initial manifold-valued sequence. The definition and analysis of convergence are intrinsic to the manifold. The adaptation technique and the convergence analysis are demonstrated by several important examples.
\end{abstract}

\begin{keyword}
Manifold-valued subdivision scheme \sep convergence \sep contractivity \sep displacement-safe scheme \sep inductive geodesic mean
\MSC[2010] 40A30 \sep 65D99 \sep 41A05
\end{keyword}

\end{frontmatter}







\section{Introduction}

In recent years methods which model certain modern data as manifold data have been developed. An example of such data is the set of orientations of an aircraft, as recorded by its black box. This time series can be interpreted as data sampled from a function mapping a real interval (the time) to the Lie group of orthogonal matrices (the orientations), see e.g., \cite{donoho}. Yet, classical methods for approximation cannot cope with manifold-valued functions. For instance, there is no guarantee that linear approximation methods such as polynomial or spline interpolation produce manifold values, due to the non-linearity of manifolds.

Contrary to the development of classical approximation methods and numerical analysis methods for real-valued functions, the development in the case of manifold-valued functions, which is rather recent, was mainly concerned in its first stages with advanced numerical and approximation processes, such as geometric integration of ODE on manifolds, e.g. \cite{iserles2000lie}, subdivision schemes on manifolds, e.g. \cite{DynSharonGlobal2015,WallnerNiraProx,ThomasLie1}, and wavelets-type approximation on manifolds, e.g. \cite{donoho,wallnerconvergent}. In this paper we focus on subdivision schemes.

Subdivision schemes were created originally to design geometrical models \cite{ChaikinPaper}. Soon, they were recognized as methods for approximation \cite{dubucDeslauriers1,4Point}. The important advantage of these schemes is their simplicity and locality. Namely, they are defined by repeatedly applying simple and local arithmetic averaging. This feature enables the extension of subdivision schemes to more abstract settings, such as matrices \cite{NirUri}, sets \cite{dyn2001spline}, curves \cite{itai2011generating}, and nets of functions \cite{conti2011analysis}.

For manifold valued data, \cite{WallnerNiraProx} introduced the concept of adapting linear subdivision schemes to manifold values, in particular for Lie groups data. This paper initiated a new direction of research on manifold-valued subdivision schemes, see e.g., \cite{UriNir,wallnerconvergent,ThomasLie1}. The adaptation of linear subdivision schemes in this paper is done by rewriting the refinement rules in repeated binary average form, and then replacing each binary average with a weighted binary geodesic average, see e.g., \cite{NirUri,WallnerNiraProx}. 

A weighted geodesic average is a generalization of the arithmetic average $(1-t)a+tb$ in Euclidean spaces, and is defined for any weight $t \in [0,1]$ as the point on the geodesic curve between the two points to be averaged, which divides it in the ratio $\frac{t}{1-t}$ (for $t=\frac{1}{2}$ it is the midpoint). Furthermore, on several manifolds, it can also be extended to weights outside $[0,1]$, by extrapolating the geodesic curve of two points beyond the points, see e.g., \cite{UriNir}. This facilitates the adaptation of interpolatory subdivision schemes which typically involve averages with negative weights. The geodesic average is also well-defined in more general spaces known as geodesic metric spaces, see e.g., \cite{bridson1999metric}, and our adaptation process and most of its analysis are also valid there.

The adaptation method proposed in this paper is for values from geodesically complete manifolds. It uses a specific form of repeated binary averaging -- the geodesic inductive mean, which enables to deduce the contractivity of the adapted schemes obtained from the well-known interpolatory $4$-point scheme \cite{4Point}, the $6$-point Dubuc-Deslauriers scheme \cite{dubucDeslauriers1}, and the first four B-spline subdivision schemes (see e.g., \cite{NiraScheme}). The contractivity is important since it is closely related to the fundamental question of convergence. 

Many results in the literature of the past few years concerning the convergence and smoothness of adapted subdivision schemes, are based on proximity conditions (see \cite{WallnerNiraProx}). A proximity condition describes a relation between the operation of an adapted subdivision scheme to the operation of its linear counterpart. Since local manifold data are nearly in a Euclidean space, the convergence results based on proximity conditions actually show that the generated values of an adapted scheme are not ``too far" from those generated by its original linear scheme. Thus, these results are valid only for ``dense enough data", which is, in general, a condition that is hard to quantify and depends on the properties of the underline manifold (such as its curvature).

Recently, a progress in the convergence analysis is established by several papers which address the question of convergence from all initial data. Such a result is presented in \cite{ebner2013convergence} for the adaptation of schemes with non-negative mask coefficients to Hadamard spaces. Results for geodesic based subdivision schemes, as well as other adaptation methods, are derived in \cite{NirUri} for the manifold of positive definite matrices. For the case of interpolatory subdivision schemes there are such convergence results for several different metric spaces \cite{UriNir, kels2013subdivision, wallnerconvergent}. In this paper, we present a condition, termed displacement-safe, guaranteeing that contractivity leads to convergence, for all initial data. The displacement-safe condition requires the values after one refinement to be not too far away from the values before the refinement. First we show that our adapted schemes are displacement-safe. Then, we demonstrate the analysis of contractivity on several adapted subdivision schemes, obtained from popular linear schemes, with masks of relatively small support. The contractivity guarantees the convergence of these schemes from all initial data.

The paper is organized as follows. We start in Section~\ref{sec:theoreticalBack} by providing a short survey of the required background, including a summary on linear subdivision schemes, a brief review on manifolds and geodesics, and several popular approaches to the adaptation of those schemes to manifold-valued data. In Section~\ref{sec:contractivity_2_convergence} we introduce the displacement-safe condition which links contractivity and convergence. Section~\ref{sec:our_adaptation} presents our method of adaptation and the proof showing that the adapted schemes are displacement-safe. We conclude the paper in Section~\ref{sec:examples} with the adaptation of  few popular schemes, and prove their convergence from all initial manifold data. 


\section{Theoretical background and notation} \label{sec:theoreticalBack}

We start by providing a few background facts together with notation on subdivision schemes, on manifolds, and on the adaptation of subdivision schemes to manifold data.


\subsection{Linear univariate subdivision schemes} \label{subsec:classical_subdivision_schemes}

In the functional setting, a univariate subdivision schemes, $\mathcal{S}$, operates on a real-valued sequence $\mathbf{f} = \{ f_i \in \mathbb{R} \mid i \in \mathbb{Z} \}$, by applying refinement rules that map $\mathbf{f}$ to a new sequence $\mathcal{S}(\mathbf{f})$ associated with the values in $\frac{1}{2} \mathbb{Z}$. This process is repeated infinitely and results in values defined on the dense set  of dyadic real numbers. In case the values generated from any $\mathbf{f}$ by this process converge uniformly at the dyadic points to values of a continuous function, we term the subdivision scheme convergent, see e.g., \cite{Dyn:2002:AOC}. A necessary and sufficient condition for the convergence of a subdivision scheme is that the sequence $\operatorname{PL}_k$, $k \in \mathbb{N}$, consists of piecewise linear interpolants to each $k$-th refined data $\{ (i2^{-k},(\mathcal{S}^kf)_i \mid i \in \mathbb{Z} \}$, is a Cauchy sequence in the uniform norm. We denote the limit of a convergent subdivision scheme, with the refinement rules $\mathcal{S}$, generated from the initial data $\mathbf{f}$ by $\mathcal{S}^{\infty}(\mathbf{f})$.

A linear univariate subdivision $\mathcal{S}$ is defined by the refinement rules,
\begin{equation} \label{eqn:classical_subdivision_scheme}
\mathcal{S}(\mathbf{f})_j = \sum_{i \in \mathbb{Z}} a_{j-2i} f_i , \quad j \in \mathbb{Z},
\end{equation}
with a finitely supported mask $\mathbf{a} = \{ a_i \} $. The refinement rules \eqref{eqn:classical_subdivision_scheme} can be written as the two rules
\begin{equation}\label{eqn:TwoPartsLinearSchemes}
  \mathcal{S}(\mathbf{f})_{2j} = \sum_{i = -\ell}^{u} a_{2i} f_{j-i}
  \qquad\quad\text{and}\quad\qquad
    \mathcal{S}(\mathbf{f})_{2j+1} = \sum_{i = -\ell}^{u} a_{2i+1} f_{j-i} ,
\end{equation}
where the coefficients of the mask are $\left(\ldots,0,a_{-2\ell},a_{-2\ell+1},\ldots,a_{2u},a_{2u+1},0,\ldots \right)$. A subdivision scheme with fixed refinement rules is termed uniform and stationary.  A subdivision scheme is termed interpolatory if $\mathcal{S}(\mathbf{f})_{2j} = f_j$, for all $j \in \mathbb{Z}$. The compact support of of the mask $\mathbf{a}$ ensures that any value $\mathcal{S}(\mathbf{f})_j$ depends only on a finite numbers of elements of $\mathbf{f}$ adjacent to $f_{\lfloor \frac{j}{2} \rfloor}$. This property is also inherited by the limit of the subdivision process. Therefore, subdivision schemes are local operators.

A necessary condition for the convergence of a subdivision scheme with the refinement rules \eqref{eqn:TwoPartsLinearSchemes} (see e.g. \cite{NiraScheme}), is
\begin{equation} \label{eqn:necessary_con_4_convergence}
\sum_{i \in \mathbb{Z}} a_{2i}=\sum_{i \in \mathbb{Z}} a_{2i+1} = 1 .
\end{equation}

In this paper we discuss the adaptation of linear univariate subdivision schemes from real numbers to manifold-valued data. We confine the adaptation to linear schemes with masks satisfying \eqref{eqn:necessary_con_4_convergence}. To distinguish between subdivision schemes operating on numbers (or vectors) to those operating on manifold values, we denote by $\mathbf{f} = \{ f_i \}_{i \in \mathbb{Z}}$ and $\mathbf{p} = \{ p_i \}_{i \in \mathbb{Z}}$ the data in Euclidean spaces and in real manifolds, respectively.


\subsection{On manifolds and geodesics} \label{subsec:manifolds_and_geodesics}

The Riemannian metric for a connected manifold $\mathcal{M}$ is a collection of symmetrical positive-definite bilinear forms on the tangent spaces which vary smoothly on $\mathcal{M}$. The length of a curve $\gamma$ on $\mathcal{M}$ is given by integrating along the curve the norm induced by the Riemannian metric. An important conclusion is that any connected Riemannian manifold is a metric space. Specifically, the intrinsic distance between two points $p_0,p_1 \in \mathcal{M}$, also called the Riemannian distance and denoted by $d(p_0,p_1)$, is defined as the infimum of the lengths of all curves connecting $p_0$ and $p_1$. Geodesics (or geodesic curves) are derived from the basic question of finding the above shortest curve, joining two arbitrary points. For two points $f_0$ and $f_1$ in a Euclidean space the shortest curve is simply the segment 
\begin{equation} \label{eqn:arithmetic_mean}
 (1-t)f_0+tf_1 , \quad t \in [0,1] . 
\end{equation}
A geodesic curve is defined as the solution to the geodesic Euler-Lagrange equations. It turns out that any shortest path between two points must be a geodesic, and it is termed a minimal geodesic. As a solution to these differential equations, the geodesic curve at a point $p_0 \in \mathcal{M}$ with a given initial direction from the tangent space at $p_0$ is unique. In fact, there exists a radius called the injectivity radius at $p_0$, $\rho(p_0)$ such that the geodesics are unique and minimal in the ``injectivity disc" of $p_0$, that is $\{p \in \mathcal{M} \colon d(p,p_0)<\rho(p_0) \}$.

In connected Riemannian manifolds, the Hopf-Rinow theorem characterizes the conditions which guarantee that geodesic curves connecting any two points are globally well defined. These manifolds are the complete Riemannian manifolds or geodesically complete manifolds. In a geodesically complete manifold there is a positive lower bound of all the injectivity radii of its points, where geodesics are minimal and unique. Nevertheless, despite of their global definition not every geodesic can be extended as a minimal geodesic beyond the injectivity disc. Another result from the Hopf-Rinow theorem is that a geodesically complete manifold is complete as a metric space $(\mathcal{M},d)$, which is essential for our convergence analysis. It is worth mentioning that any compact restriction of a general Riemannian manifolds is complete, and therefore the results in this paper, in view of the locality of subdivision schemes, are relevant to a wide class of manifolds. For more details on geodesic complete manifolds see e.g., \cite[Chapter 13.3]{gallier2012notes} and references therein. 

Geodesics have a major role in our adaptation process. Therefore, our prototype manifolds are complete Riemannian manifolds. Henceforth, we denote by $\mathcal{M}$ a complete Riemannian manifold and by $d(\cdot,\cdot)$ its associated Riemannian distance. Let $\mathcal{G}(t;p_0,p_1)$, $t \in [0,1]$ denote a minimal geodesic curve connecting two points $p_0,p_1$ in $\mathcal{M}$, such that $\mathcal{G}(i;p_0,p_1)=p_i$, $i=0,1$ and
\begin{equation*} 
  d\left(p_0,\mathcal{G}(t; p_0,p_1)\right) =  td\left(p_0,p_1\right) , 
\end{equation*}
or equivalently
\[ d\left(\mathcal{G}(t; p_0,p_1),p_1\right) =  (1-t)d\left(p_0,p_1\right) , \quad t \in [0,1] . \]

We define the geodesic average of $p_0$ and $p_1$ with weight $t$ as
\[ M_t(p_0,p_1) = \mathcal{G}(t; p_0,p_1) . \]
in analogy to the arithmetic average \eqref{eqn:arithmetic_mean}. Thus, $M_t$ has the metric property
\begin{equation} \label{eqn:metric_property}
d\left(p_0,M_t(p_0,p_1)\right) =  td\left(p_0,p_1\right) , \quad d\left(M_t(p_0,p_1),p_1\right) =  (1-t)d\left(p_0,p_1\right) , \quad t \in [0,1] . 
\end{equation}
In case the minimal geodesic is not unique, we choose one in a canonical way (see e.g., \cite{dyn2001spline}). In the adaptation of schemes with negative mask coefficients, such as interpolatory schemes, we also use $M_t$ with values of $t$ outside $[0,1]$, but close to it. In these cases the metric property \eqref{eqn:metric_property} is modified by replacing $t$ and $1-t$ by their absolute values. 

There are spaces, more general than Riemannian manifolds, where any two points in the space can be connected by a curve satisfying the metric property. Such are the geodesic metric spaces, see e.g., \cite{bridson1999metric}. In these spaces, the differential structure is missing and the geodesic curve is defined by the metric property. Clearly, this definition agrees with the geodesic curve on Riemannian manifolds.

 
\subsection{Adaptation methods} \label{subsec:Subdivision_schemes_based_RBA}


There are several different methods for the adaptation of the refinement rules in \eqref{eqn:TwoPartsLinearSchemes} to manifold data. Here we present shortly three ``popular" methods, all ``intrinsic" to the manifold and independent of the ambient Euclidean space.

The first method is based on the log-exp mappings, and consists of three steps. In case of a Lie-group these steps are (see e.g., \cite{ThomasLie1}): (i) projecting the points in $\mathcal{M}$ taking part in the refinement rule into the corresponding Lie algebra, (ii) applying the linear refinement rule on the projected samples in the Lie algebra, (iii) projecting the result back to the Lie group. There are several computational difficulties in the realization of this ``straightforward" idea, mainly in the computation of the logarithm and exponential maps, see e.g., \cite{shingel2009interpolation}. 

The same idea applies for general manifolds but with the Lie algebra replaced by the tangent space at a ``base" point on the manifold, chosen in the neighbourhood of the points taking part in the refinement rule. The inherent difficulty in this approach is the choice of the base point, see e.g., \cite{wallnerconvergent}.

The second method is based on repeated binary geodesic averages. The refinement rules $\mathcal{S}$ of the form \eqref{eqn:TwoPartsLinearSchemes} satisfying \eqref{eqn:necessary_con_4_convergence}, can be written in terms of repeated weighted binary averages in several ways \cite{WallnerNiraProx}. Using one of these representations of $\mathcal{S}$, and replacing each binary average between numbers, by a geodesic average between two points on the manifold, one gets an adaptation of $\mathcal{S}$ to the manifold. For an example see \cite{NirUri}. The difficulty in this approach is the choice of the form of the repeated binary averages. In this paperwe suggest such a form and discuss its advantage.
 
The third method for the adaptation of $\mathcal{S}$ is based on the Riemannian center of mass. Interpreting each sum in \eqref{eqn:TwoPartsLinearSchemes} as a weighted affine average, one replaces each average by the corresponding Riemannian center of mass. The inherent difficulty in this approach is that the Riemannian center of mass is not known explicitly and has to be computed by iterations, see e.g., \cite{grohs2012quasi}. This center of mass is defined in \eqref{eqn:Karcher_Mean} and is briefly discussed in Subsection~\ref{subsec:ouradaptation}.


\section{From contractivity to convergence}  \label{sec:contractivity_2_convergence}

The analysis of adapted subdivision schemes in many papers is based on the method of proximity, introduced in \cite{WallnerNiraProx}. This analysis uses conditions that indicate the proximity of the adapted refinement rule $\mathcal{T}$ to its corresponding linear refinement rule $\mathcal{S}$. The basic proximity condition is 
\begin{equation} \label{eqn:proximity_condition}
d \left( \mathcal{S}(\mathbf{p}),\mathcal{T}(\mathbf{p}) \right)  \le \nu  \left( \delta(\mathbf{p}) \right)^2 ,
\end{equation}
with $\nu>0$ and where
\begin{equation} \label{eqn:delta_def}
\delta(\mathbf{p}) = \sup_{i \in \mathbb{Z}} d(p_i,p_{i+1}) < \infty .
\end{equation}
If $\mathcal{S}$ is a refinement rule of a linear convergent scheme that generates $C^1$ limits, then condition \eqref{eqn:proximity_condition} together with another technical assumption on the refinement rule $\mathcal{S}$, leads to the conclusion that $ \mathcal{T}$ also generates $C^1$ limits, if it converges. The weakness of the proximity method is that convergence is only guaranteed for ``close enough" initial data points. This requirement is typically not easy to quantify as it depends on the manifold and its curvature. Thus, there is much greater benefit in using the proximity method for smoothness analysis when convergence is already assured. For example, the $C^1$ smoothness of adapted schemes based on geodesic averages which satisfy the proximity condition \eqref{eqn:proximity_condition}, is established in \cite{WallnerNiraProx}. Our convergence results  directly indicates the $C^1$ smoothness of the limits of the adapted subdivision schemes by repeated geodesic averages, since such schemes in manifolds with a globally bounded curvature, satisfy \eqref{eqn:proximity_condition} \cite{WallnerNiraProx}. Henceforth, we do not address the question of smoothness and concentrate on convergence, starting with general results on convergence.

First, we provide a formal definition of the contractivity property in the manifold setting.
\begin{definition} 
A manifold-valued subdivision scheme $\mathcal{T}$ has a contractivity factor $\mu$, if there exists $\mu<1$ such that
\[ \delta\left( \mathcal{T}(\mathbf{p}) \right)  \le \mu \delta(\mathbf{p})   ,  \]
for any data $\mathbf{p}$ on the manifold.
\end{definition}
For linear subdivision schemes contractivity of the refinement rules implies the convergence of the subdivision schemes from any initial data, see e.g. \cite{NiraScheme}. For general schemes, a contractivity factor is not sufficient for convergence. This can be easily seen by adding a small constant to each refinement rule of a converging linear subdivision scheme. Therefore, we introduce an additional condition which together with contractivity guarantees convergence. This condition is similar to a condition in \cite{marinov2005geometrically}, and is termed ``displacement-safe" after the latter.

\begin{definition}[Displacement-safe] 
We say that a subdivision scheme $\mathcal{T}$ is displacement-safe if
\begin{equation} \label{eqn:contractivity_extra_condition}
d(\mathcal{T}(\mathbf{p})_{2j},p_j )  \le C \delta(\mathbf{p} ) ,\quad j \in \mathbb{Z} .
\end{equation}
for any sequence of manifold data $\mathbf{p}$, where $C$ is a constant independent of $\mathbf{p}$.
\end{definition}

\begin{remark} \label{rem:DS_condition}
Two additional comments about the displacement-safe condition:
\begin{enumerate}[(a)]
\item
All converging linear subdivision schemes (for numbers) are displacement-safe. This follows from the necessary condition for convergence \eqref{eqn:necessary_con_4_convergence} and the linearity of the schemes, 
\[     
\abs{ \mathcal{S}(\mathbf{f})_{2i}-f_{i}} = \abs{\sum_{j} a_{2i-2j}f_j - f_i} =  \abs{\sum_{j} a_{2i-2j}(f_j - f_i)} \le  \left( \sum_{j} \abs{a_{2j}} \right) C \delta(\mathbf{f}) ,
\]
where $C$ depends on the size of the support of the mask $\{a_j\}$. 
\item
Relation \eqref{eqn:contractivity_extra_condition} clearly holds for manifold-valued interpolatory schemes that satisfy 
\begin{equation} \label{eqn:interpolatory_adapted_ref}
\mathcal{T}(\mathbf{p})_{2j}=\mathbf{p}_j , \quad j \in \mathbb{Z}. 
\end{equation}
\end{enumerate}	
\end{remark}

For the convergence analysis we follow the classical tools and extend the piecewise linear polygon to manifold-valued data.
\begin{definition}[Piecewise geodesic curve] \label{def:Piecewise_geodesic}
Let $\mathbf{p} \subset \mathcal{M}$ be a sequence of manifold data. For any non-negative integer $k$, we define the piecewise geodesic polygon $\operatorname{PG}_k(\mathbf{p})(\cdot)$ as the continuous curve $\operatorname{PG}_k(\mathbf{p}) \colon \mathbb{R} \to \mathcal{M}$ such that
\[    \operatorname{PG}_k(\mathbf{p})(t) = M_{t2^{k}-n}(p_{n},p_{n+1}), \quad t\in [2^{-k}n,2^{-k}(n+1)), \quad n \in \mathbb{Z} , \quad k \in \mathbb{Z}_+  . \]
\end{definition}
We can now define the convergence of manifold-valued subdivision schemes intrinsically, in an analogous way to the definition in the case of real-valued subdivision schemes.
\begin{definition} \label{def:Convergence}
A manifold-valued subdivision scheme $\mathcal{M}$ is convergent if the sequence 
\[ \left\lbrace  \operatorname{PG}_k \left( \mathcal{T}^{k}(\mathbf{p}) \right) \right\rbrace_{k=0}^\infty \] 
converges uniformly in the metric of the manifold, \rm{for any sequence $\mathbf{p}$ of manifold data}.
\end{definition}
 We are now ready to prove the convergence result.
\begin{theorem} \label{thm:contraction_convergence}
Let $\mathcal{T}$ be a displacement-safe subdivision scheme for manifold data with a contractivity factor $\mu<1$. Then, $\mathcal{T}$ is convergent.
\end{theorem}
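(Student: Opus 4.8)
The plan is to prove that the sequence of piecewise geodesic curves $\bigl\{\operatorname{PG}_k(\mathcal{T}^{k}(\mathbf{p}))\bigr\}_{k=0}^{\infty}$ is a Cauchy sequence in the uniform metric, and then to invoke the completeness of $(\mathcal{M},d)$ to produce a continuous limit curve. Throughout I write $\mathbf{p}^{(k)} = \mathcal{T}^{k}(\mathbf{p})$, so that $\mathbf{p}^{(k+1)} = \mathcal{T}(\mathbf{p}^{(k)})$. The first step is immediate: iterating the contractivity estimate $\delta(\mathcal{T}(\mathbf{q})) \le \mu\,\delta(\mathbf{q})$ gives
\[
\delta(\mathbf{p}^{(k)}) \le \mu^{k}\,\delta(\mathbf{p}), \qquad k \in \mathbb{Z}_{+},
\]
so the data become arbitrarily dense as $k \to \infty$ (recall $\delta(\mathbf{p})<\infty$ by \eqref{eqn:delta_def}).

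The core of the argument is to estimate the uniform distance between two consecutive curves $P := \operatorname{PG}_k(\mathbf{p}^{(k)})$ and $Q := \operatorname{PG}_{k+1}(\mathbf{p}^{(k+1)})$. At a common dyadic node $t = 2^{-k}n = 2^{-(k+1)}(2n)$ one has $P(t) = p^{(k)}_n$ and $Q(t) = p^{(k+1)}_{2n} = \mathcal{T}(\mathbf{p}^{(k)})_{2n}$, so the displacement-safe condition \eqref{eqn:contractivity_extra_condition} gives $d(P(t),Q(t)) \le C\,\delta(\mathbf{p}^{(k)}) \le C\mu^{k}\delta(\mathbf{p})$. For arbitrary $t \in [2^{-k}n,\,2^{-k}(n+1))$ I would compare $P(t)$ and $Q(t)$ to their values at the node $2^{-k}n$ by the triangle inequality. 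Using the metric property \eqref{eqn:metric_property}, the coarse curve $P$ moves by at most $d(p^{(k)}_n,p^{(k)}_{n+1}) \le \mu^{k}\delta(\mathbf{p})$ inside its segment, while $Q$ — which traverses the two finer geodesic pieces joining $p^{(k+1)}_{2n},\,p^{(k+1)}_{2n+1},\,p^{(k+1)}_{2n+2}$ over this interval — moves by at most $2\,\delta(\mathbf{p}^{(k+1)}) \le 2\mu^{k+1}\delta(\mathbf{p})$. Combining the three contributions yields a bound independent of $n$:
\[
\sup_{t} d\bigl(P(t),Q(t)\bigr) \le (1 + C + 2\mu)\,\mu^{k}\,\delta(\mathbf{p}) =: K\,\mu^{k}\,\delta(\mathbf{p}).
\]

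Since $\mu < 1$, summing this geometric bound shows that for $m > k$,
\[
\sup_{t} d\bigl(\operatorname{PG}_k(\mathbf{p}^{(k)})(t),\,\operatorname{PG}_m(\mathbf{p}^{(m)})(t)\bigr)
\le \sum_{j=k}^{m-1} K\mu^{j}\delta(\mathbf{p})
\le \frac{K\mu^{k}}{1-\mu}\,\delta(\mathbf{p}),
\]
which tends to $0$ as $k \to \infty$; hence the curves form a uniformly Cauchy sequence. Because a geodesically complete manifold is complete as a metric space (Hopf--Rinow), a uniformly Cauchy sequence of continuous maps $\mathbb{R}\to\mathcal{M}$ converges uniformly to a continuous limit, which is exactly convergence of $\mathcal{T}$ in the sense of Definition~\ref{def:Convergence}.

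I expect the main obstacle to be the per-interval estimate of the motion of the two curves — in particular recognizing that $Q$ spans \emph{two} fine geodesic segments over a single coarse interval and bounding its total displacement there — together with the bookkeeping that keeps the constant $K$ genuinely independent of both $k$ and the node $n$. Once the uniform Cauchy property is established, the passage to a continuous limit via completeness is standard.
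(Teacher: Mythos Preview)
Your proof is correct and follows essentially the same route as the paper: both compare consecutive piecewise geodesic curves via the triangle inequality through the node $p^{(k)}_n$ and $p^{(k+1)}_{2n}$, obtaining the identical constant $(1+C+2\mu)\mu^{k}\delta(\mathbf{p})$, and then conclude by geometric summation and metric completeness via Hopf--Rinow. Your write-up is in fact slightly more detailed in spelling out the uniform Cauchy estimate for arbitrary $m>k$ and the continuity of the limit, but there is no substantive difference.
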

\begin{proof}
To show convergence we prove that $\{ \operatorname{PG}_k(\mathcal{T}^{k}(\mathbf{p}))(t) \}_{k\in \mathbb{N}}$ is a Cauchy sequence for all $t$ with a uniform constant. Since $\mathcal{M}$ is geodesically complete, it is also metric complete and any such Cauchy sequence converges to a limit in $\mathcal{M}$. 
 
Let $2^{-k}j \le t <2^{-k}(j+1)$, for some $j \in \mathbb{Z}$, then by the displacement-safe condition and the triangle inequality we get
\begin{eqnarray*}
d\left(\operatorname{PG}_k(\mathcal{T}^{k}(\mathbf{p})(t),\operatorname{PG}_{k+1}(\mathcal{T}^{k+1}(\mathbf{p}))(t)\right) 
&\le & d\left(\operatorname{PG}_k(\mathcal{T}^{k}(\mathbf{p})(t),\mathcal{T}^k(\mathbf{p})_j\right)  + d\left(\mathcal{T}^k(\mathbf{p})_j,\mathcal{T}^{k+1}(\mathbf{p})_{2j} \right) \\ & \: & +d\left(\mathcal{T}^{k+1}(\mathbf{p})_{2j},\operatorname{PG}_{k+1}(\mathcal{T}^{k+1}(\mathbf{p}))(t)\right) \\
&\le & \delta(\mathcal{T}^k(\mathbf{p})) + d\left(\mathcal{T}^{k+1}(\mathbf{p})_{2j},\mathcal{T}^k(\mathbf{p})_j \right)  +2\delta(\mathcal{T}^{k+1}(\mathbf{p})) \\
  &\le & (1+C+2 \mu)\delta(\mathcal{T}^k(\mathbf{p})) \le \widetilde{C} \mu^k ,
\end{eqnarray*}
where $\widetilde{C} = (1+C+2\mu)\delta(\mathbf{p})$ is a positive constant independent of $k$. The claims follows since $\mu<1$.
\end{proof}

In view of (b) of Remark \ref{rem:DS_condition}, we conclude
\begin{corollary} \label{cor:contractive_interpolation}
Assume that $\mathcal{T}$ is an interpolatory subdivision scheme of the form \eqref{eqn:interpolatory_adapted_ref}, defined on $(\mathcal{M},d)$, with a contractivity factor. Then, $\mathcal{T}$ is a convergent subdivision scheme.
\end{corollary}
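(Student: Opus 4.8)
The plan is simply to verify that an interpolatory scheme of the form \eqref{eqn:interpolatory_adapted_ref} satisfies the hypotheses of Theorem~\ref{thm:contraction_convergence}, so that convergence follows at once from that theorem. Since the contractivity factor $\mu<1$ is assumed outright, the only thing left to check is that such a $\mathcal{T}$ is displacement-safe.

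First I would observe that the interpolatory condition \eqref{eqn:interpolatory_adapted_ref} states precisely that $\mathcal{T}(\mathbf{p})_{2j}=p_j$ for every $j\in\mathbb{Z}$. Consequently the displacement between an even-indexed refined value and the corresponding coarse value vanishes identically,
\[
d\left( \mathcal{T}(\mathbf{p})_{2j},p_j \right) = d(p_j,p_j) = 0 ,
\]
which trivially satisfies the displacement-safe inequality \eqref{eqn:contractivity_extra_condition} with the constant $C=0$ (indeed with any $C\ge 0$), uniformly in $\mathbf{p}$. This is exactly the content of part (b) of Remark~\ref{rem:DS_condition}, to which the statement already appeals.

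Having established that $\mathcal{T}$ is displacement-safe and possesses a contractivity factor $\mu<1$, I would then invoke Theorem~\ref{thm:contraction_convergence} directly to conclude that $\mathcal{T}$ is convergent in the sense of Definition~\ref{def:Convergence}. There is essentially no obstacle here: the corollary is a pure specialization of the theorem, and the single point worth emphasizing is that interpolation forces the displacement-safe estimate to hold in its strongest form, namely with zero displacement at the even indices, so that the general constant $C$ appearing in the definition plays no role in this case.
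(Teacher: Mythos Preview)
Your proposal is correct and follows exactly the paper's reasoning: the corollary is stated immediately after the sentence ``In view of (b) of Remark~\ref{rem:DS_condition}, we conclude,'' with no further proof given, since interpolation makes the displacement-safe condition trivial and Theorem~\ref{thm:contraction_convergence} then applies directly. Your write-up simply makes explicit the one-line observation that $d(\mathcal{T}(\mathbf{p})_{2j},p_j)=0$, which is precisely what the paper leaves implicit.
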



\section{Adaptation based on geodesic inductive means} \label{sec:our_adaptation}
 
We study the adaptation of a given linear subdivision scheme to manifold-valued data. Our adaptation method is a specific choice in the second method in Subsection~\ref{subsec:Subdivision_schemes_based_RBA}. The expression of the refinement rules \eqref{eqn:TwoPartsLinearSchemes} in terms of repeated binary averages that we use is new and is designed in a way that facilitates the convergence analysis of the adapted schemes.

A basic property of all adaptations based on repeated geodesic averages, is that if one uses the arithmetic (binary) average for numbers instead of the geodesic average in the adapted refinement rules, the resulting refinement rules must coincide with those of the original linear subdivision schemes. We further demand the preservation of symmetry in the refinement rules, if any. Many families of subdivision schemes, e.g. \cite{dubucDeslauriers1}, consist of subdivision schemes with symmetric masks, namely with mask coefficients satisfying $a_i=a_{-i}$, $i\in\mathbb{Z}$. Our adaptation of the refinement rules takes into account this symmetry.


\subsection{The adaptation method} \label{subsec:ouradaptation}

Our adaptation of weighted averages is based on the idea of inductive means \cite{lim2014weighted}.
\begin{definition} \label{def:non_sym_avg}
Let $ \mathbf{p} =(p_1,\ldots,p_n)$ be a finite sequence of manifold elements, and let $ \mathbf{w} = ( w_1,\ldots,w_n)$ be their associated real weights satisfying $\sum_{j=1}^n w_j = 1$. We further assume that $ w_1 \ge w_2 \ge \ldots \ge w_n$. Then, the geodesic inductive mean $\rba{n}{\mathbf{p}}{\mathbf{w}}$ is defined recursively as,
\begin{equation} \label{eqn:def_of_inductive_mean}
\rba{n}{\mathbf{p}}{\mathbf{w}}  = 
\begin{cases} 
  M_{w_2}(p_1,p_2) 																											&\mbox{if } n = 2, \\  
  M_{w_n}( \rba{n-1}{ (p_1,\ldots,p_{n-1})}{ \frac{1}{1-w_n}( w_1,\ldots,w_{n-1} )},p_n) & \mbox{if } n >2 . 
\end{cases} 
\end{equation}
\end{definition}

It is easy to verify that Definition \ref{def:non_sym_avg}, when applied on real numbers with $M_t$ the binary arithmetic mean, is identical with averaging the entire set of numbers at once, since commutativity is valid. Therefore, the basic requirement of adaptation, as described above, is satisfied. 

It is interesting to note that in Hadamard spaces (NPC spaces), the inductive mean in \eqref{eqn:def_of_inductive_mean} approximates the Riemannian center of mass (mentioned in Subsection~\ref{subsec:Subdivision_schemes_based_RBA}), defined as
\begin{equation} \label{eqn:Karcher_Mean}
 \arg \min_{p \in \mathcal{M}} \sum_{j=1}^n w_j \left( d(p,p_j)\right)^2  .
\end{equation}
For manifolds or for metric spaces \eqref{eqn:Karcher_Mean} is not necessarily unique, and no explicit form of it is available. Yet, in Hadamard spaces \eqref{eqn:Karcher_Mean} is unique, and the rate of convergence of $\rba{n}{\mathbf{p}}{\mathbf{w}}$ to \eqref{eqn:Karcher_Mean} as $n \to \infty$ can be found in \cite{lim2013approximations}.

\begin{remark}
The weights of Definition \ref{def:non_sym_avg} are assumed to be sorted. The reason is to facilitate our calculations of contractivity. This statement is demonstrated through the examples in Section \ref{sec:examples} and their analysis. At this point, consider the recursive form of the inductive mean together with the triangle inequality to have  
\[  d \left( \rba{n}{\mathbf{p}}{\mathbf{w}}, q \right) \le   d\left(\rba{n}{\mathbf{p}}{\mathbf{w}},\rba{n-1}{\hat{\mathbf{p}} }{\hat{\mathbf{w}} } \right) + d\left( \rba{n-1}{\hat{\mathbf{p}} }{\hat{\mathbf{w}} } , q \right) , \quad q \in \mathcal{M} , \]
with $\hat{\mathbf{p}} = (p_1,\ldots,p_{n-1})$ and $\hat{\mathbf{w}}=\frac{1}{1-w_n}( w_1,\ldots,w_{n-1} )$. Then, in cases of positive weights we have by the metric property \eqref{eqn:metric_property} that the first distance is equal to $ w_n d\left(p_n,\rba{n-1}{\hat{\mathbf{p}} }{\hat{\mathbf{w}} } \right) $, regardless of $q$. To minimize this distance we require $w_n$ to be as small as possible.
\end{remark}

For the preservation of symmetry of the refinement rules we provide a symmetrical version of $\rba{n}{\mathbf{p}}{\mathbf{w}}$, denoted by $\srba{n}{\mathbf{p}}{\mathbf{w}}$ and defined as follows.
\begin{definition} \label{def:sym_avg}
Let $ \mathbf{p} =(p_1,\ldots,p_n)$ be a finite sequence of manifold elements, and let $ \mathbf{w} = ( w_1,\ldots,w_n)$ be their associated real weights satisfying $\sum_{j=1}^n w_j = 1$, 
and 
\[ w_j=w_{n-j+1}  , \quad j=1,\ldots, \ell  \quad , \quad \ell = \lfloor n/2 \rfloor. \]

For even $n$ we define the symmetric average as
\[  \srba{n}{\mathbf{p}}{\mathbf{w}} = M_{1/2}( \rba{n/2}{\mathbf{p}^1}{\mathbf{w}^1},\rba{n/2}{\mathbf{p}^2}{\mathbf{w}^1} , \]
where $\mathbf{w}^1$ is the sorted set of weights obtained from $(w_1, \ldots, w_{n/2})$ and $\mathbf{p}^1$ is their associated data points from $(p_1,\ldots, p_{n/2})$. Similarly, $\mathbf{p}^2$ is the data points from $(p_{n/2+1}, \ldots, p_{n})$ corresponding to $\mathbf{w}^1$.

If $n=2\ell+1$ then we redefine the weights to be of even length and symmetric by 
\[ \widetilde{\mathbf{w}} = (w_1,\ldots,w_{\ell-1},\frac{1}{2}w_{\ell},\frac{1}{2}w_{\ell},w_{\ell+1},\ldots,w_n) , \]
with the corresponding elements set as $\widetilde{\mathbf{p}} = (  p_1,\ldots,p_{\ell-1},p_{\ell},p_{\ell},p_{\ell+1},\ldots,p_n)$, and $ \srba{n}{\mathbf{p}}{\mathbf{w}}$ is defined as $ \srba{n+1}{\widetilde{\mathbf{p}}}{\widetilde{\mathbf{w}}}$.
\end{definition}

Equipped with Definitions \ref{def:non_sym_avg} and \ref{def:sym_avg}, we can formulate the adaptation method.
\begin{definition} \label{def:adapted:subdivision_schemes}
Let $\mathcal{S}$ be a linear univariate subdivision schemes, given by \eqref{eqn:TwoPartsLinearSchemes}, and let $M_t$ be a geodesic weighted average. For the adaptation $\mathcal{T}$ of the refinement rules of $\mathcal{S}$ we denote the local subset of the data $\mathbf{p}$ participating in the refinement rules for $\mathcal{T}(\mathbf{p})_{2j}$ and $\mathcal{T}(\mathbf{p})_{2j+1}$ by
\[  \mathbf{p}^j = \left(  p_{j-u},\ldots,p_{j+\ell}\right) \in \mathcal{M}^n   , \quad n= u+\ell+1  . \]
We denote the corresponding weights in the rule for $\mathcal{T}(\mathbf{p})_{2j}$ by $w_i = a_{2u+2-2i} $, $i=1,\ldots,n$ and in the rule for $\mathcal{T}(\mathbf{p})_{2j+1}$ by $\quad u_i = a_{2u+3-2i}$, $i = 1,\ldots,n$.
With these notations, the adapted refinement rules are 
\[ \mathcal{T}(\mathbf{p})_{2j} = \rba{n}{\widetilde{\mathbf{p}^j}}{\widetilde{\mathbf{w}}}  \qquad \text{  and  } \qquad  \mathcal{T}(\mathbf{p})_{2j+1} = \rba{n}{\widehat{\mathbf{p}^j}}{\widehat{\mathbf{u}}} , \]
where $\widetilde{\mathbf{w}}$ is the sorted $\mathbf{w} = \left(w_1,\ldots,w_n\right)$ and $\widetilde{\mathbf{p}^j}$ consists of the corresponding points to $\widetilde{\mathbf{w}}$ from $\mathbf{p}^j$. Similarly, $\widehat{\mathbf{u}}$ is the sorted $\mathbf{u}= \left(u_1,\ldots,u_n\right)$ and $\widehat{\mathbf{p}^j}$ consists of the corresponding points to $\mathbf{u}$ from $\mathbf{p}^j$. If in addition, there is a symmetry in the weights of the refinement rules, then the adapted refinement rule is defined by the symmetrical average of Definition \ref{def:sym_avg}. Namely, $\mathbf{w}_i = \mathbf{w}_{n-i+1} $, $i=1,\ldots,n$, leads to $\mathcal{T}(\mathbf{p})_{2j}  = \srba{n}{\mathbf{p}^j}{\mathbf{w}}$, and $\mathbf{u}_i = \mathbf{u}_{n-i+1} $, $i=1,\ldots,n$ implies $\mathcal{T}(\mathbf{p})_{2j+1}  = \srba{n}{\mathbf{p}^j}{\mathbf{u}}$.
\end{definition}

We term the schemes of Definition \ref{def:adapted:subdivision_schemes} (based on Geodesic Inductive Means) GIM-schemes.


\subsection{The GIM-schemes are displacement-safe}

For the the case of interpolatory schemes, we get by Corollary \ref{cor:contractive_interpolation} that contractivity implies convergence. However, for non-interpolatory schemes, contractivity by itself does not imply convergence but together with the displacement-safe condition \eqref{eqn:contractivity_extra_condition} in view of Theorem \ref{thm:contraction_convergence}. The following proposition reduces the proof of convergence of GIM-schemes to the proof of their contractivity.

\begin{proposition} \label{prop:distance_to_non_sym_RBA}
In the notation of Definition \ref{def:sym_avg}, we have
\[ \max_{i \in \{1,\ldots,n\}} d( \rba{n}{\mathbf{p}}{\mathbf{w}} , p_i ) \le C_n \delta(\mathbf{p}) , \]
where $C_n$ depends on $n$ and on $\|\mathbf{w}\|_{\infty} = \max_{1 \le j \le n} |w_{j}| $, but is independent of $\mathbf{p}$.
\end{proposition}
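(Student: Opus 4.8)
The plan is to argue by induction on the number of points $n$, exploiting the recursive definition \eqref{eqn:def_of_inductive_mean} of the inductive mean together with the metric property \eqref{eqn:metric_property} of the binary geodesic average. For the base case $n=2$ we have $\rba{2}{\mathbf{p}}{\mathbf{w}} = M_{w_2}(p_1,p_2)$, and \eqref{eqn:metric_property} (read with absolute values, so as to cover weights slightly outside $[0,1]$) gives $d(\rba{2}{\mathbf{p}}{\mathbf{w}},p_1)=|w_2|\,d(p_1,p_2)$ and $d(\rba{2}{\mathbf{p}}{\mathbf{w}},p_2)=|w_1|\,d(p_1,p_2)$, both bounded by $\|\mathbf{w}\|_\infty\,\delta(\mathbf{p})$ since $d(p_1,p_2)\le\delta(\mathbf{p})$; thus $C_2=\|\mathbf{w}\|_\infty$ works.

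For the inductive step I would write $m=\rba{n}{\mathbf{p}}{\mathbf{w}}=M_{w_n}(m',p_n)$ with $m'=\rba{n-1}{\hat{\mathbf{p}}}{\hat{\mathbf{w}}}$, where $\hat{\mathbf{p}}=(p_1,\dots,p_{n-1})$ and $\hat{\mathbf{w}}=\tfrac{1}{1-w_n}(w_1,\dots,w_{n-1})$, and first record two stability facts. Since the weights are sorted and sum to $1$, the least weight obeys $w_n\le 1/n<1$, so $1-w_n\ge(n-1)/n>0$; hence the normalization factor $\tfrac{1}{1-w_n}\le \tfrac{n}{n-1}$ is positive (keeping $\hat{\mathbf{w}}$ sorted) and $\|\hat{\mathbf{w}}\|_\infty\le\tfrac{n}{n-1}\|\mathbf{w}\|_\infty$. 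Moreover, deleting $p_n$ only removes a single consecutive-distance term, so $\delta(\hat{\mathbf{p}})\le\delta(\mathbf{p})$. By the metric property, $d(m,m')=|w_n|\,d(m',p_n)$ and $d(m,p_n)=|1-w_n|\,d(m',p_n)$, so it remains to control $d(m',p_n)$.

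Using the triangle inequality together with the induction hypothesis applied to $m'$ at the point $p_{n-1}$, one obtains $d(m',p_n)\le d(m',p_{n-1})+d(p_{n-1},p_n)\le C_{n-1}\,\delta(\hat{\mathbf{p}})+\delta(\mathbf{p})\le(C_{n-1}+1)\,\delta(\mathbf{p})$. Combining, for $i\le n-1$ one gets $d(m,p_i)\le d(m,m')+d(m',p_i)\le\big(\|\mathbf{w}\|_\infty(C_{n-1}+1)+C_{n-1}\big)\delta(\mathbf{p})$, while $d(m,p_n)=|1-w_n|\,d(m',p_n)\le(1+\|\mathbf{w}\|_\infty)(C_{n-1}+1)\,\delta(\mathbf{p})$, using $|1-w_n|\le 1+\|\mathbf{w}\|_\infty$. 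Taking the maximum over $i$ yields the asserted bound with an explicit recursive constant $C_n$.

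The step requiring the most care, and the reason the sorting hypothesis $w_1\ge\cdots\ge w_n$ is imposed, is guaranteeing that $C_n$ depends only on $n$ and $\|\mathbf{w}\|_\infty$, never on $\mathbf{p}$. Since $C_{n-1}$ is evaluated at the inflated weight norm $\|\hat{\mathbf{w}}\|_\infty$, one must know this norm stays controlled; the estimate $\tfrac{1}{1-w_n}\le\tfrac{n}{n-1}$, which hinges precisely on $w_n$ being the smallest weight, supplies this, giving $C_{n-1}(\|\hat{\mathbf{w}}\|_\infty)\le C_{n-1}(\tfrac{n}{n-1}\|\mathbf{w}\|_\infty)$ once $C_{n-1}$ is taken nondecreasing in its norm argument. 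The only other delicate point is invoking \eqref{eqn:metric_property} for the extrapolated averages with $t$ slightly outside $[0,1]$, where the metric identity holds with $|w_n|$ and $|1-w_n|$. An analogous split-and-combine argument, using the midpoint average of Definition \ref{def:sym_avg}, would handle the symmetric mean $\srba{n}{\mathbf{p}}{\mathbf{w}}$.
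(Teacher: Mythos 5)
Your proof is correct and follows essentially the same route as the paper's: induction along the recursive structure of the inductive mean, combining the metric property of $M_t$ with the triangle inequality, and using the sortedness of the weights to keep the normalization factors under control. The only (cosmetic) difference is that you peel off $p_n$ and bound $\tfrac{1}{1-w_n}\le\tfrac{n}{n-1}$ via $w_n\le 1/n$, tracking the inflation of $\|\hat{\mathbf{w}}\|_\infty$ through the recursion, whereas the paper runs the induction upward over partial normalized weight vectors and bounds the ratio $\bigl|w_{m+1}/\sum_{j=1}^{m+1}w_j\bigr|$ directly by $\max\{\tfrac{1}{m+1},\|\mathbf{w}\|_\infty\}$ via a sign case-split; both hinge on $w_n$ being the smallest weight.
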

\begin{proof}
We prove the proposition by induction on $m$ in \eqref{eqn:def_of_inductive_mean}. In the $m$-th step, $2 \le m \le n $ in \eqref{eqn:def_of_inductive_mean} we use as weights the normalized partial set of the first $m$ weights,
\[  \mathbf{w}_m = \frac{1}{\sum_{j=1}^m w_j}(w_1,\ldots,w_m) ,\]
their associated set of elements $\mathbf{p}_m = (p_1,\ldots,p_m)$ and the corresponding 
\[ \delta \left(\mathbf{p}_m\right) = \max_{i=1,\ldots,m-1} d\left(p_i,p_{i+1} \right) .\] 
Clearly, $\delta(\mathbf{p}_m) \le \delta(\mathbf{p})$. 

The basis of the induction is $m=2$, where $w_1+w_2 = 1$. Then, by the metric property \eqref{eqn:metric_property}
\[  \max_{i=1,2}(d(M_{w_2}(p_1,p_2),p_i) \le \max \{|w_1|,|w_2|\} \delta(\mathbf{p}_2)  \le \| \mathbf{w} \|_\infty \delta(\mathbf{p}_2)  .\]
Thus, we can choose $C_2 = \| \mathbf{w} \|_{\infty}$. For the induction step, we assume
\begin{equation} \label{eqn:induction_assumption_local_ds}
 \max_{i \in \{1,\ldots,m\}} d( \rba{m}{\mathbf{p}_m}{\mathbf{w}_m}  , p_i ) \le C_m \delta(\mathbf{p_m}) , 
\end{equation}
for a fixed $m$, $2 \le m<n$. 

First, we bound the distance between the averages $\rba{m+1}{\mathbf{p}_{m+1}}{\mathbf{w}_{m+1}}$ and $\rba{m}{\mathbf{p}_m}{\mathbf{w}_m} $, which in view of Definition \ref{def:non_sym_avg} and \eqref{eqn:metric_property} is given by
\[  d(M_{\frac{w_{m+1}}{\sum_{j=1}^{m+1}w_j}}( \rba{m}{\mathbf{p}_m}{\mathbf{w}_m}, p_{m+1}  )   , \rba{m}{\mathbf{p}_m}{\mathbf{w}_m})  =  \abs{\frac{w_{m+1}}{\sum_{j=1}^{m+1}w_j}} d(\rba{m}{\mathbf{p}_m}{\mathbf{w}_m}, p_{m+1}  ) .\]
Now
\[ d(\rba{m}{\mathbf{p}_m}{\mathbf{w}_m}, p_{m+1}  ) \le  d(\rba{m}{\mathbf{p}_m}{\mathbf{w}_m}, p_{j}  ) + d(p_j,p_{m+1})  , \]
and since there exists $j$, $1 \le j \le m$ such that $d(p_j,p_{m+1})  \le \delta(\mathbf{p}_{m+1}) $, we get by the induction hypothesis \eqref{eqn:induction_assumption_local_ds}, and since $\delta(\mathbf{p}_m) \le \delta(\mathbf{p}_{m+1})$,
\begin{equation} \label{eqn:induction_auxiler}
d( \rba{m+1}{\mathbf{p}_{m+1}}{\mathbf{w}_{m+1}} , \rba{m}{\mathbf{p}_m}{\mathbf{w}_m}  ) \le  \abs{\frac{w_{m+1}}{\sum_{j=1}^{m+1}w_j}}(C_m+1)\delta(\mathbf{p}_{m+1}) .
\end{equation}
To bound $\abs{\frac{w_{m+1}}{\sum_{j=1}^{m+1}w_j}}$, recall that $\mathbf{w}$ is sorted. If $w_{m+1}<0$ then $\sum_{j=1}^{m+1} w_j \ge \sum_{j=1}^{n} w_j=1$, and therefore $\abs{\frac{w_{m+1}}{\sum_{j=1}^{m+1}w_j}} \le \abs{w_{m+1}} \le \norm{\mathbf{w}}_\infty$. On the other hand, if $w_{m+1} \ge 0$, then $ \abs{\frac{w_{m+1}}{\sum_{j=1}^{m+1}w_j}} \le  \frac{1}{m+1} $ since $\sum_{j=1}^{m+1} w_j \ge (m+1)w_{m+1}$. Thus,
\begin{equation} \label{eqn:bound_induction_factor}
\abs{\frac{w_{m+1}}{\sum_{j=1}^{m+1}w_j}} \le \max \left\lbrace \frac{1}{m+1} , \norm{\mathbf{w}}_\infty \right\rbrace  .
\end{equation}

Now, for any $1 \le j \le m+1$ we bound $d( \rba{m+1}{\mathbf{p}_{m+1}}{\mathbf{w}_{m+1}} , p_j)$ by
\[  d( \rba{m+1}{\mathbf{p}_{m+1}}{\mathbf{w}_{m+1}} , \rba{m}{\mathbf{p}_m}{\mathbf{w}_m}  ) + d(\rba{m}{\mathbf{p}_m}{\mathbf{w}_m},p_{j^\ast}) + d(p_{j^\ast},p_j) , \]
with $j^\ast$ satisfying $d(p_{j^\ast},p_j)  \le \delta(\mathbf{p}) $ and $1 \le j^\ast \le m$. Combining the latter with \eqref{eqn:induction_assumption_local_ds}, \eqref{eqn:induction_auxiler} and \eqref{eqn:bound_induction_factor} we obtain
\[d( \rba{m+1}{\mathbf{p}_{m+1}}{\mathbf{w}_{m+1}} , p_j)   \le  C_{m+1} \delta(\mathbf{p}) , \quad j \in \{ 1,\ldots,m+1 \} \]
where $C_{m+1} = \left( 1 + C_m \right) \left(1  + \max \left\lbrace \frac{1}{m+1} , \norm{ \mathbf{w}}|_\infty \right\rbrace \right)$, depends solely on $m$ and $\norm{\mathbf{w}}_\infty$ .
\end{proof}

We use the results of Proposition \ref{prop:distance_to_non_sym_RBA} to obtain a similar conclusion for $\srba{n}{\mathbf{p}}{\mathbf{w}}$.
\begin{corollary} \label{cor:sym_DS_condition}
In the notation of Definition \ref{def:non_sym_avg}, we have
\[ d( \srba{n}{\mathbf{p}}{\mathbf{w}} , \mathbf{p} ) = \max_{i \in \{1,\ldots,n\}} d( \srba{n}{\mathbf{p}}{\mathbf{w}} , p_i ) \le C^S_n \delta(\mathbf{p}) , \]
where $C^S_n = 2C_\ell+\frac{1}{2}$, $\ell = \lfloor \frac{n+1}{2} \rfloor$, and $C_\ell$ is the constant of Proposition \ref{prop:distance_to_non_sym_RBA}.
\end{corollary}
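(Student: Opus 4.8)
The plan is to reduce everything to the case of even $n$ and to exploit the fact that, by Definition~\ref{def:sym_avg}, the symmetric average is the geodesic midpoint of two ordinary inductive means built from the two halves of the data, each of which is already controlled by Proposition~\ref{prop:distance_to_non_sym_RBA}. Writing $\ell = \lfloor (n+1)/2\rfloor$, for even $n$ (so $\ell = n/2$) I set $q_1 = \rba{\ell}{\mathbf{p}^1}{\mathbf{w}^1}$ and $q_2 = \rba{\ell}{\mathbf{p}^2}{\mathbf{w}^1}$, so that $\srba{n}{\mathbf{p}}{\mathbf{w}} = M_{1/2}(q_1,q_2)$; note that both halves carry the \emph{same} sorted weights $\mathbf{w}^1$ because $w_j = w_{n-j+1}$.

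For the even case I would fix an index $i$ and assume, without loss of generality by the symmetry of the construction, that $p_i$ lies in the first half. The triangle inequality together with the midpoint metric property \eqref{eqn:metric_property} gives
\[ d\bigl(\srba{n}{\mathbf{p}}{\mathbf{w}}, p_i\bigr) \le d\bigl(M_{1/2}(q_1,q_2), q_1\bigr) + d(q_1, p_i) = \tfrac{1}{2} d(q_1,q_2) + d(q_1,p_i). \]
Proposition~\ref{prop:distance_to_non_sym_RBA} applied to the first half bounds $d(q_1,p_i) \le C_\ell\,\delta(\mathbf{p})$. To bound $d(q_1,q_2)$ I would bridge across the midpoint of the data through the naturally adjacent pair $p_{n/2},p_{n/2+1}$, one in each half:
\[ d(q_1,q_2) \le d(q_1,p_{n/2}) + d(p_{n/2},p_{n/2+1}) + d(p_{n/2+1},q_2) \le (2C_\ell + 1)\,\delta(\mathbf{p}), \]
where the outer terms are estimated by Proposition~\ref{prop:distance_to_non_sym_RBA} and the middle term by the definition of $\delta$. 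Substituting yields $d(\srba{n}{\mathbf{p}}{\mathbf{w}}, p_i) \le (2C_\ell + \tfrac12)\delta(\mathbf{p})$, which is precisely $C_n^S\delta(\mathbf{p})$; taking the maximum over $i$ closes the even case.

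For odd $n = 2\ell' + 1$ I would invoke the reduction already built into Definition~\ref{def:sym_avg}: $\srba{n}{\mathbf{p}}{\mathbf{w}} = \srba{n+1}{\widetilde{\mathbf{p}}}{\widetilde{\mathbf{w}}}$, where $\widetilde{\mathbf{p}}$ duplicates the central point $p_{\ell'}$. Since the inserted adjacent pair $(p_{\ell'},p_{\ell'})$ has distance zero and no other consecutive distance is altered, $\delta(\widetilde{\mathbf{p}}) = \delta(\mathbf{p})$; moreover the half-length of the padded sequence is $(n+1)/2 = \ell$. Applying the even case to $\widetilde{\mathbf{p}}$ therefore yields the bound $(2C_\ell+\tfrac12)\delta(\mathbf{p})$, and since every original $p_i$ appears among the entries of $\widetilde{\mathbf{p}}$, the same bound holds for all $p_i$.

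The step I expect to require the most care is the application of Proposition~\ref{prop:distance_to_non_sym_RBA} to the two halves. The proposition is phrased for a sequence processed in sorted-weight order, whereas $\mathbf{p}^1$ and $\mathbf{p}^2$ are reorderings of blocks of consecutive data; I must ensure that the consecutive distances fed to the proposition are still controlled by $\delta(\mathbf{p})$, rather than by a multiple reflecting the permutation, so that the constant collapses to the clean value $2C_\ell + \tfrac12$. Equivalently, the crux is that the inductive averaging of each half stays \emph{local} in the original indexing; once this is secured, the remaining ingredient, the midpoint bridge through $p_{n/2},p_{n/2+1}$, is routine.
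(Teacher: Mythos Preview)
Your argument is essentially the same as the paper's: triangle inequality through $q_1$, the midpoint property for $\tfrac12 d(q_1,q_2)$, a bridge of $d(q_1,q_2)$ through an adjacent pair in the two halves (the paper uses generic $p_{j_1}\in\mathbf{p}^1$, $p_{j_2}\in\mathbf{p}^2$ with $d(p_{j_1},p_{j_2})\le\delta(\mathbf{p})$ rather than your specific $p_{n/2},p_{n/2+1}$), and Proposition~\ref{prop:distance_to_non_sym_RBA} applied to each half; the odd case is handled exactly as you describe, via the padding in Definition~\ref{def:sym_avg}. The concern you flag about $\delta$ of the reordered halves is real, but the paper does not address it either---it simply asserts $\delta(\mathbf{p}^1)\le\delta(\mathbf{p})$---so your proposal matches the paper's level of rigor on that point.
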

\begin{proof}
Using the notation of Definition \ref{def:sym_avg}, we denoted by $p_{j_1} \in \mathbf{p}^1$ and $p_{j_2} \in \mathbf{p}^2$ points that satisfy $d(p_{j_1},p_{j_2}) \le \delta(\mathbf{p})$ (such two points always exist). Without loss of generality, let $p_{i} \in \mathbf{p}^1$. Then, by the metric property \eqref{eqn:metric_property} and the triangle inequality we get
\begin{equation} \label{eqn:bound_sym_avg_proof}
\begin{array}{r@{}l} 
 d( \srba{n}{\mathbf{p}}{\mathbf{w}} , p_i )   &{} \le  d(\srba{n}{\mathbf{p}}{\mathbf{w}}, \rba{\ell}{\mathbf{p}^1}{\mathbf{w}^1}) + d(\rba{\ell}{\mathbf{p}^1}																													{\mathbf{w}^1} , p_i )  \\ 
 																	&{} \le  \frac{1}{2} d(\rba{\ell}{\mathbf{p}^1}{\mathbf{w}^1},\rba{\ell}{\mathbf{p}^2}{\mathbf{w}^1}) + d(\rba{\ell}{\mathbf{p}^1}																										{\mathbf{w}^1} , p_i )  .
\end{array}
\end{equation}
Now, due to Proposition \ref{prop:distance_to_non_sym_RBA}, we have
\[  d(\rba{\ell}{\mathbf{p}^1}{\mathbf{w}^1} , p_i ) \le C_\ell  \delta(\mathbf{p}^1) \le C_\ell  \delta(\mathbf{p}) , \]
while by the triangle inequality and by Proposition \ref{prop:distance_to_non_sym_RBA} we get
\begin{eqnarray*}
 d(\rba{\ell}{\mathbf{p}^1}{\mathbf{w}^1},\rba{\ell}{\mathbf{p}^2}{\mathbf{w}^1}) &\le & d(\rba{\ell}{\mathbf{p}^1}{\mathbf{w}^1},p_{j_1}) + d(p_{j_1},p_{j_2}) + d(p_{j_2},\rba{\ell}{\mathbf{p}^2}{\mathbf{w}^1})  \\
 &\le & C_\ell  \delta(\mathbf{p}) +  \delta(\mathbf{p}) +C_\ell  \delta(\mathbf{p})  = (2C_\ell +1) \delta(\mathbf{p}) .
\end{eqnarray*}
The last two bounds together with \eqref{eqn:bound_sym_avg_proof} complete the proof.
\end{proof}

Proposition \ref{prop:distance_to_non_sym_RBA} and Corollary \ref{cor:sym_DS_condition} lead to
\begin{corollary} \label{cor:local_DS_for_GIM}
Any GIM-subdivision scheme satisfies the displacement-safe condition \eqref{eqn:contractivity_extra_condition}.
\end{corollary}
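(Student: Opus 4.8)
The plan is to reduce the statement directly to the two bounds just established, Proposition~\ref{prop:distance_to_non_sym_RBA} and Corollary~\ref{cor:sym_DS_condition}, since together they already control the distance from an inductive mean to every one of its input points. It therefore suffices to recognize $p_j$ as one of those input points and to pass from the local step-size to the global one.

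First I would recall from Definition~\ref{def:adapted:subdivision_schemes} that, for a GIM-scheme, the even rule $\mathcal{T}(\mathbf{p})_{2j}$ is computed from the finite local block $\mathbf{p}^j = (p_{j-u},\ldots,p_{j+\ell}) \in \mathcal{M}^n$, $n = u+\ell+1$, as either $\rba{n}{\widetilde{\mathbf{p}^j}}{\widetilde{\mathbf{w}}}$ in the non-symmetric case or $\srba{n}{\mathbf{p}^j}{\mathbf{w}}$ in the symmetric case. The key observation is that $p_j$ is one of the entries of $\mathbf{p}^j$, and that sorting the weights (and correspondingly permuting the points) does not alter the underlying set $\{p_{j-u},\ldots,p_{j+\ell}\}$; hence $p_j$ is among the points $p_i$ appearing in the maxima of Proposition~\ref{prop:distance_to_non_sym_RBA} and Corollary~\ref{cor:sym_DS_condition}.

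Next I would apply the appropriate bound to the local block. In the non-symmetric case Proposition~\ref{prop:distance_to_non_sym_RBA} gives $d(\mathcal{T}(\mathbf{p})_{2j}, p_j) \le \max_i d(\rba{n}{\widetilde{\mathbf{p}^j}}{\widetilde{\mathbf{w}}}, p_i) \le C_n \delta(\mathbf{p}^j)$, and in the symmetric case Corollary~\ref{cor:sym_DS_condition} yields the same estimate with $C^S_n$ in place of $C_n$. Because $\mathbf{p}^j$ is a block of consecutive entries of $\mathbf{p}$, we have $\delta(\mathbf{p}^j) = \max_{j-u \le i < j+\ell} d(p_i,p_{i+1}) \le \delta(\mathbf{p})$, so in either case $d(\mathcal{T}(\mathbf{p})_{2j}, p_j) \le C \delta(\mathbf{p})$ with $C = \max\{C_n, C^S_n\}$. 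Since the mask is finitely supported, $n = u+\ell+1$ is fixed and $C_n, C^S_n$ depend only on $n$ and on $\|\mathbf{w}\|_\infty$ (equivalently on the fixed mask $\mathbf{a}$), so $C$ is independent of $j$ and of $\mathbf{p}$; this is exactly the displacement-safe condition \eqref{eqn:contractivity_extra_condition}.

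The argument is essentially bookkeeping, so there is no serious obstacle; the only point requiring any care is to verify that $p_j$ genuinely survives as one of the listed points after the sorting and symmetrization steps—including the duplication of the central point in the odd symmetric case of Definition~\ref{def:sym_avg}—which is immediate from the constructions in Definitions~\ref{def:non_sym_avg} and~\ref{def:sym_avg}, together with the uniformity of the constant $C$ over all $j$.
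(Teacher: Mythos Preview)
Your proposal is correct and is exactly the argument the paper has in mind: the corollary is stated in the paper without proof, merely as an immediate consequence of Proposition~\ref{prop:distance_to_non_sym_RBA} and Corollary~\ref{cor:sym_DS_condition}, and you have simply spelled out the routine bookkeeping (identifying $p_j$ as an entry of $\mathbf{p}^j$ and using $\delta(\mathbf{p}^j)\le\delta(\mathbf{p})$) that those references encapsulate.
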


Therefore, in view of Corollary \ref{cor:local_DS_for_GIM} and Theorem \ref{thm:contraction_convergence} we conclude.
\begin{theorem} \label{thm:local_convergence_by_contractivity}
Let $\mathcal{T}$ be a GIM-subdivision scheme. If $\mathcal{T}$ has a contractivity factor then $\mathcal{T}$ is convergent.
\end{theorem}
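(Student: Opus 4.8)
The plan is to obtain the result as an immediate synthesis of two facts already established, so that no new machinery is required. The substantive ingredient is Corollary~\ref{cor:local_DS_for_GIM}, which asserts that every GIM-subdivision scheme satisfies the displacement-safe condition~\eqref{eqn:contractivity_extra_condition}. This is precisely the property that, when paired with contractivity, supplies the hypotheses of Theorem~\ref{thm:contraction_convergence}. Thus my first step is simply to record that $\mathcal{T}$, being a GIM-scheme, is displacement-safe.

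The second step is to invoke the hypothesis of the theorem: $\mathcal{T}$ is assumed to possess a contractivity factor $\mu<1$, meaning $\delta(\mathcal{T}(\mathbf{p}))\le\mu\,\delta(\mathbf{p})$ for every sequence of manifold data $\mathbf{p}$. With both the displacement-safe property and the contractivity factor in hand, the hypotheses of Theorem~\ref{thm:contraction_convergence} are met verbatim, and I would conclude convergence directly from that theorem. The entire deduction is therefore a one-line application.

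Since the final step is purely a citation, the only point that warrants care is ensuring that the displacement-safe constant furnished by Corollary~\ref{cor:local_DS_for_GIM} is genuinely uniform in $\mathbf{p}$. This is where the real weight of the argument sits, and it is already discharged upstream: Proposition~\ref{prop:distance_to_non_sym_RBA} produces a constant $C_n$ depending only on the length $n$ and on $\|\mathbf{w}\|_\infty$, and Corollary~\ref{cor:sym_DS_condition} propagates this to the symmetric average with $C^S_n=2C_\ell+\tfrac12$. Because a GIM-scheme is stationary with a finitely supported mask, both the number of participating points $n$ and the weights are fixed independently of the input sequence, so the constant $C$ in~\eqref{eqn:contractivity_extra_condition} is genuinely data-independent. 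Consequently, I do not anticipate any obstacle in the theorem itself; all the genuine effort lives in the preceding Proposition and Corollaries, and this statement is their clean corollary.
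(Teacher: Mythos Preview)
Your proposal is correct and mirrors the paper's own argument exactly: the paper states this theorem immediately after Corollary~\ref{cor:local_DS_for_GIM} with the sentence ``Therefore, in view of Corollary~\ref{cor:local_DS_for_GIM} and Theorem~\ref{thm:contraction_convergence} we conclude,'' offering no further proof. Your additional remark about the uniformity of the displacement-safe constant is a reasonable sanity check but is not elaborated in the paper.
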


\begin{remark}
Due to contractivity, the convergence from all initial data is also valid for spaces where the geodesic curve is not unique, regardless of the choice of $M_t$. In other words, the freedom in choosing the geodesic on which we define the geodesic average, is reflected by a set of possible limits (a number of possible limits for each initial data) but not in the fact that the limit exists. Note that since $\mathcal{M}$ is a geodesic complete manifold, the injectivity radius of the manifold is bounded away from zero, meaning that from some fixed refinement level, the geodesic is guaranteed to be unique.
\end{remark}

\section{Examples of convergent GIM-schemes} \label{sec:examples}

The aim of this subsection is two folded; First, to demonstrate via examples our adaptation method. Second, to present a technique for deriving a contractivity factor of a GIM-scheme. 

Let us begin with the adaptation of the family of interpolatory $4$-point schemes \cite{4Point}.
\begin{example} \label{exm:maifold_4pt}   
The interpolatory $4-$point scheme \cite{4Point} is defined in the functional setting as
\begin{equation} \label{def:numbers4pt}
  \mathcal(S(\mathbf{f}))_{2i} = f_i  , \qquad\quad\text{and}\quad\qquad
    \mathcal(S(\mathbf{f}))_{2i+1} = -\omega(f_{i-1}+f_{i+2}) + (\frac{1}{2}+\omega)(f_i+f_{i + 1}) .
\end{equation} 
With $\omega \in (0,\omega^\ast)$ and $\omega^\ast \left(\approx  0.19273\right)$ the unique solution of the cubic equation $32\omega^3+4\omega-1= 0$, the limits generated by the scheme are $C^1$ \cite{hechler2009c1}. The case $\omega=\frac{1}{16}$ coincides with the cubic Dubuc-Deslauriers scheme \cite{dubucDeslauriers1}. 

We adapt the $4$-point scheme using a geodesic average $M_t$, under the assumption that it is well defined for $t$ in a small neighbourhood of $[0,1]$. Note that such an adaptation was already done in \cite{UriNir} for positive definite matrices, and in \cite{kels2013subdivision} for sets. The symmetry of the coefficients, $(-\omega,\frac{1}{2}+\omega,\frac{1}{2}+\omega,-\omega)$ implies that the adaptation of \eqref{def:numbers4pt} is
\begin{equation} \label{eqn:adapted4pt_local}
 \mathcal{T}(\mathbf{p})_{2i} = p_i  , \quad\text{and}\quad \mathcal{T}(\mathbf{p})_{2i+1} =  \srba{4}{\mathbf{p}}{\mathbf{w}}  = M_\frac{1}{2} \left( M_{-2\omega} \left( p_{i} ,p_{i-1}  \right), M_{-2\omega} \left(p_{i+1}, p_{i+2} \right) \right) , 
\end{equation}
with $\mathbf{w} = (-\omega,\frac{1}{2}+\omega,\frac{1}{2}+\omega,-\omega)$ and $\mathbf{p} = (p_{i-1},p_i,p_{i+1},p_{i+2})$.
\begin{figure}
\centering    
\subfloat[Data points]{\label{fig:4pt_points}			   \includegraphics[width=0.4\textwidth]{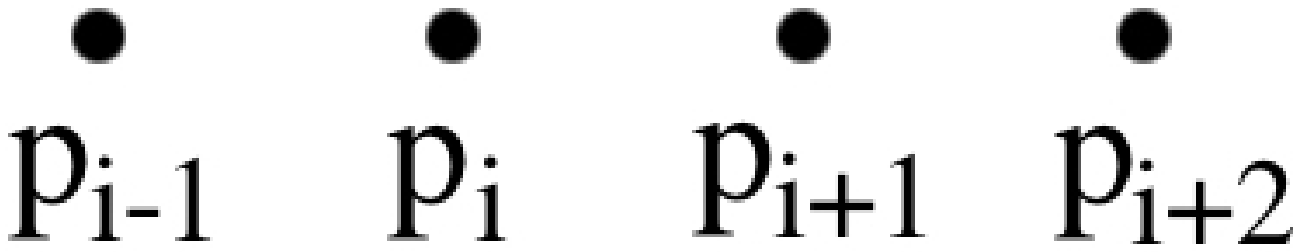}} \quad
\subfloat[$M_{-2\omega} \left( p_{i} ,p_{i-1}  \right)$ and $M_{-2\omega} \left(p_{i+1}, p_{i+2} \right)$]{\label{fig:4pt_first_level_V2} \includegraphics[width=0.45\textwidth]{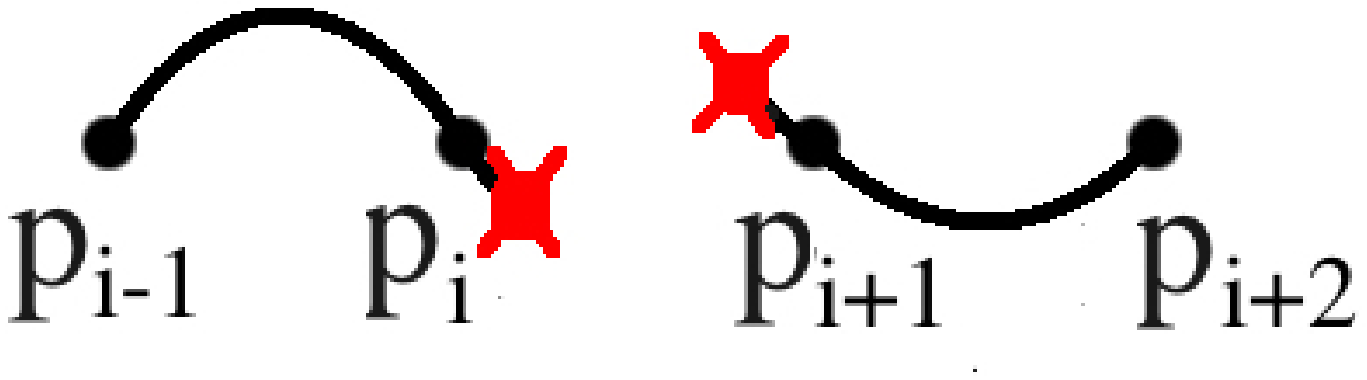}} \\
\subfloat[Inserted point]{\label{fig:4pt_new_point}      \includegraphics[width=0.45\textwidth]{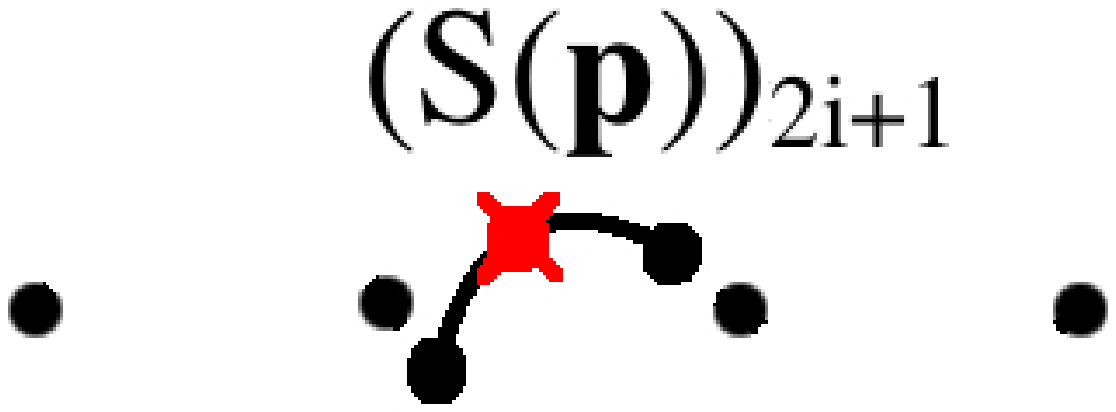}}
\subfloat[Distance to bound: the bright curve connects the two refined points]{\label{fig:4pt_dist2bound}    \includegraphics[width=0.45\textwidth]{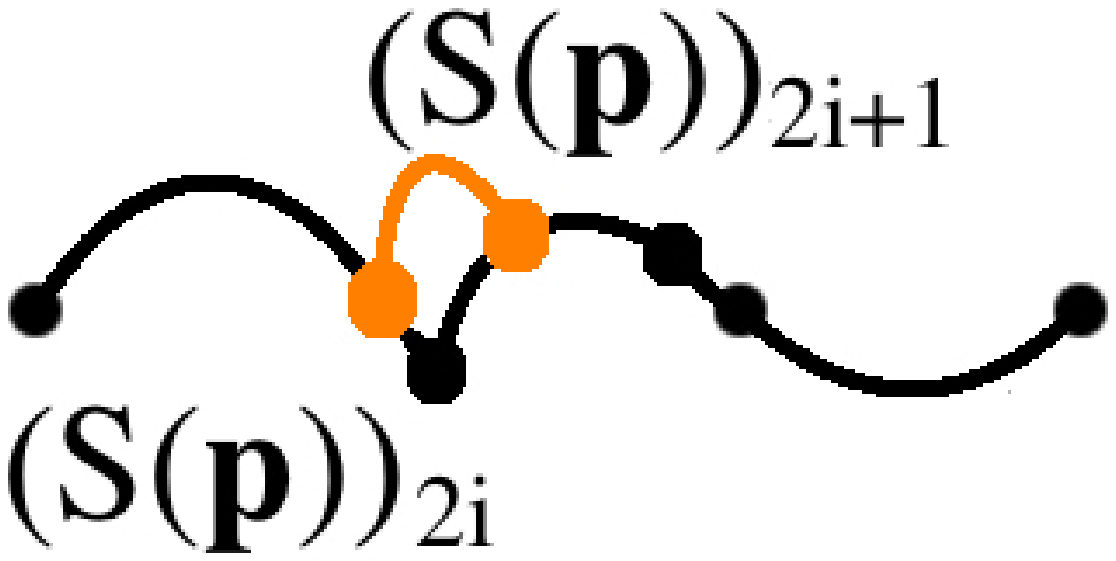}} 
\caption{The adaptation of the $4$-point scheme. The curved lines represent geodesic curves connecting two points}
\label{fig:4pt}
\end{figure}
 
The refinement \eqref{eqn:adapted4pt_local} is presented schematically in Figures~\ref{fig:4pt_points}--\ref{fig:4pt_new_point}. The analysis of contractivity aims to bound the distance $d( \mathcal{T}(\mathbf{p})_{2i+1}, \mathcal{T}(\mathbf{p})_{2i})$, which is depicted in Figure~\ref{fig:4pt_dist2bound}. 

By the triangle inequality and the metric property of $M_t$ \eqref{eqn:metric_property} we have (see Figure \ref{fig:4pt_dist2bound})
\begin{equation} \label{eqn:4pt_bound_ineq}
 d( \mathcal{T}(\mathbf{p})_{2i+1}, \mathcal{T}(\mathbf{p})_{2i}) \le d(\mathcal{T}(\mathbf{p})_{2i+1},M_{-2\omega} \bigl( p_{i} ,p_{i-1}  \bigr) ) + d(M_{-2\omega} \bigl( p_{i} ,p_{i-1}  \bigr),p_{i}) ,
\end{equation}
with,
\begin{equation}  \label{eqn:4pt_first_dist}
d(M_{-2\omega} \bigl( p_{i} ,p_{i-1}  \bigr),p_{i}) = 2\omega d(p_{i-1},p_i) \le  2\omega \delta(\mathbf{p}) 
\end{equation}
and  
\begin{equation} \label{eqn:4pt_sec_dist}
d(\mathcal{T}(\mathbf{p})_{2i+1},M_{-2\omega} \bigl( p_{i} ,p_{i-1}  \bigr)) = \frac{1}{2} d(M_{-2\omega} \bigl(p_{i+1}, p_{i+2} \bigr) ,M_{-2\omega} \bigl( p_{i} ,p_{i-1}  \bigr)) .
\end{equation}
Using again the triangle inequality, we bound the right-hand side of \eqref{eqn:4pt_sec_dist},
\begin{eqnarray*}
d( M_{-2\omega} \bigl( p_{i} ,p_{i-1}  \bigr), M_{-2\omega} \bigl(p_{i+1}, p_{i+2} \bigr) ) &\le &  d(M_{-2\omega} \bigl( p_{i} ,p_{i-1}  \bigr),p_i) \\ 
 &+& d(p_i,p_{i+1}) + d(p_{i+1},M_{-2\omega} \bigl(p_{i+1}, p_{i+2} \bigr) )  ,
\end{eqnarray*}
which in view of \eqref{eqn:4pt_first_dist} and \eqref{eqn:4pt_sec_dist} leads to
\begin{equation} \label{eqn:4pt_third_dist}
 d(\mathcal{T}(\mathbf{p})_{2i+1},M_{-2\omega} \bigl( p_{i} ,p_{i-1}  \bigr)) \le \frac{1}{2} (1+4 \omega)\delta(\mathbf{p}) . 
\end{equation}
Finally, using \eqref{eqn:4pt_bound_ineq},\eqref{eqn:4pt_first_dist} and \eqref{eqn:4pt_third_dist} we arrive at
\[ d(\mathcal{T}(\mathbf{p})_{2i+1},\mathcal{T}(\mathbf{p})_{2i}) \le (4\omega+\frac{1}{2}) \delta(\mathbf{p}) . \]
Due to the symmetry of the refinement rule \eqref{eqn:adapted4pt_local} we also have $d( \mathcal{T}(\mathbf{p})_{2i+1}, \mathcal{T}(\mathbf{p})_{2i+2})\le \mu \delta(\mathbf{p})$, corresponding to $\mu =4\omega+\frac{1}{2}$. Thus, when $ \mu <1 $ we have contractivity. Applying Corollary \ref{cor:contractive_interpolation} for $\omega<\frac{1}{8}$ we get convergence. Note that for the important case $\omega = \frac{1}{16}$ we have $\mu = \frac{3}{4}$, and that the best contractivity factor $\mu = \frac{1}{2}$ is obtained for the piecewise geodesic scheme with $\omega = 0$.
\end{example}

The next example derives the GIM-scheme from the interpolatory $6$-point Dubuc-Deslauriers (DD) scheme \cite{dubucDeslauriers1}.
\begin{example} \label{exm:manifold_6pt}
The interpolatory $6-$ point DD scheme is defined in the functional setting as
\begin{equation} \label{def:numbers6pt}
  \mathcal(S(\mathbf{f}))_{2i} = f_i  , \qquad\quad\text{and}\quad\qquad
    \mathcal(S(\mathbf{f}))_{2i+1} = \sum_{j=-2}^3  w_{j} f_{i+j} ,
\end{equation} 
where 
\[   (w_{-2},\ldots,w_3) = \frac{1}{2^8}\left( 3,-25,150,150,-25,3 \right) .\]
Thus, the adapted scheme is
\begin{equation} \label{def:manifold_6pt}
  \mathcal{T}(\mathbf{p})_{2i} = p_i  , \qquad\quad\text{and}\quad\qquad
    \mathcal{T}(\mathbf{p})_{2i+1} = \srba{6}{(p_{i-2},p_{i-1},\ldots,p_{i+3})}{\mathbf{w}} ,
\end{equation} 
where $\mathbf{w} = \frac{1}{2^8}\left( 3,-25,150,150,-25,3 \right)$. The analysis of contractivity of the adapted $6$-point \eqref{def:manifold_6pt} is given in Appendix \ref{appendix:6pt}. This analysis shows a contractivity factor of $\mu = 0.9844$. The convergence of this scheme follows by Corollary \ref{cor:contractive_interpolation}.
\end{example}

The last two examples demonstrate that as the support of the weights of the adapted refinement rule becomes large the derivation of a contractivity factor with the above tools becomes more difficult. Indeed, in a similar fashion and without any further assumptions on the metric space we do not get contractivity for the $8$-point DD subdivision scheme, adapted according to Definition \ref{def:adapted:subdivision_schemes}. It should be noted that the $8$-point DD scheme adapted by the log-exp mapping has a contractivity factor in Complete Riemannian manifolds \cite{wallnerconvergent}

We conclude this section with applications of Definition \ref{def:adapted:subdivision_schemes} to the adaptation of the first four B-spline schemes.
\begin{example} \label{exm:B_splines}
The mask of the B-spline subdivision scheme of degree $m$ has the nonzero coefficients 
\[ 2^{-m} \left( \binom{m+1}{0}, \binom{m+1}{1} , \ldots ,\binom{m+1}{m},\binom{m+1}{m+1} \right) .\] 
The GIM-scheme corresponding to $m=1$ generates the piecewise geodesic curve connecting consecutive initial points by geodesic curves. The adaptation of the next scheme, corresponding to $m=2$ (the corner cutting scheme) yields
\begin{equation} \label{def:CC_scheme}
  \mathcal{T}(\mathbf{p})_{2i} = M_\frac{1}{4}(p_i,p_{i+1})   , \qquad\quad\text{and}\quad\qquad
    \mathcal{T}(\mathbf{p})_{2i+1} =  M_\frac{3}{4}(p_i,p_{i+1}) ,
\end{equation} 
The refined points are inserted on the geodesic curve connecting consecutive points in $\mathbf{p}$ and it is easy to verify a contractivity factor $\frac{1}{2}$. The smoothness of this scheme is studied extensively in \cite{noakes1998nonlinear,noakes1999accelerations}. This scheme, for the manifold of positive definite matrices, is studied in \cite{NirUri}, and various algebraic properties of the limits generated by it are derived.

We adapt the cubic B-spline scheme ($m=3$), using the symmetrical mean of Definition \ref{def:sym_avg}. Thus
\begin{equation} \label{def:Cubic_B_spline_scheme}
  \mathcal{T}(\mathbf{p})_{2i} = \srba{3}{ (p_{i-1},p_i,p_{i+1})}{(\frac{1}{8},\frac{3}{4},\frac{1}{8})}  ,  \qquad
    \mathcal{T}(\mathbf{p})_{2i+1} =    M_\frac{1}{2}(p_i,p_{i+1}) .
\end{equation} 

\begin{figure}
\centering    
\subfloat[Data points]{\label{fig:CBS_points}						 		    \includegraphics[width=0.4\textwidth]{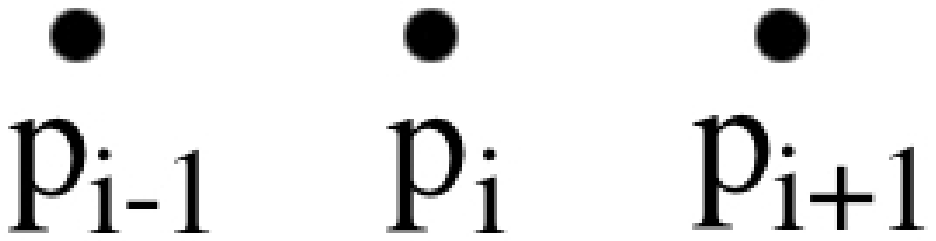}}
\subfloat[Inserted point (odd index)]{\label{fig:CBS_odd_refined} 		   		    \includegraphics[width=0.4\textwidth]{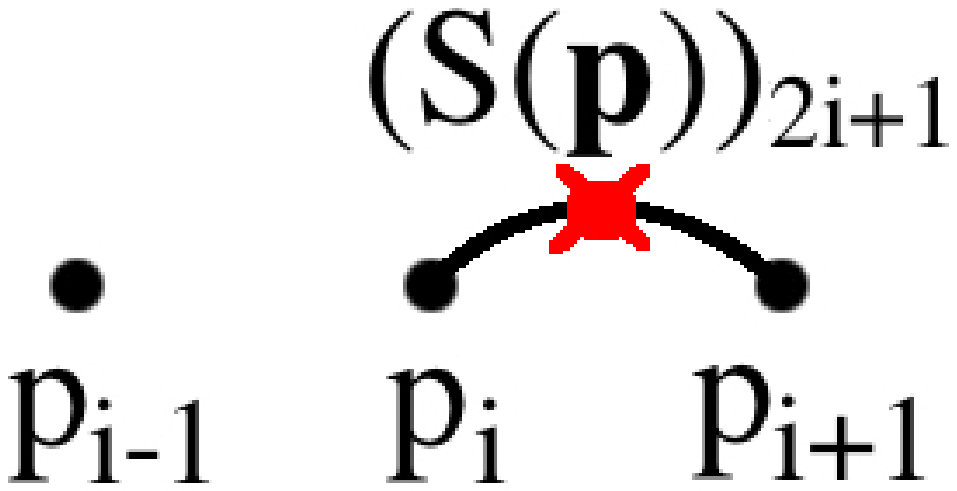}} \\
\subfloat[Refined point (even index)]{\label{fig:CBS_even_refined}        		    \includegraphics[width=0.4\textwidth]{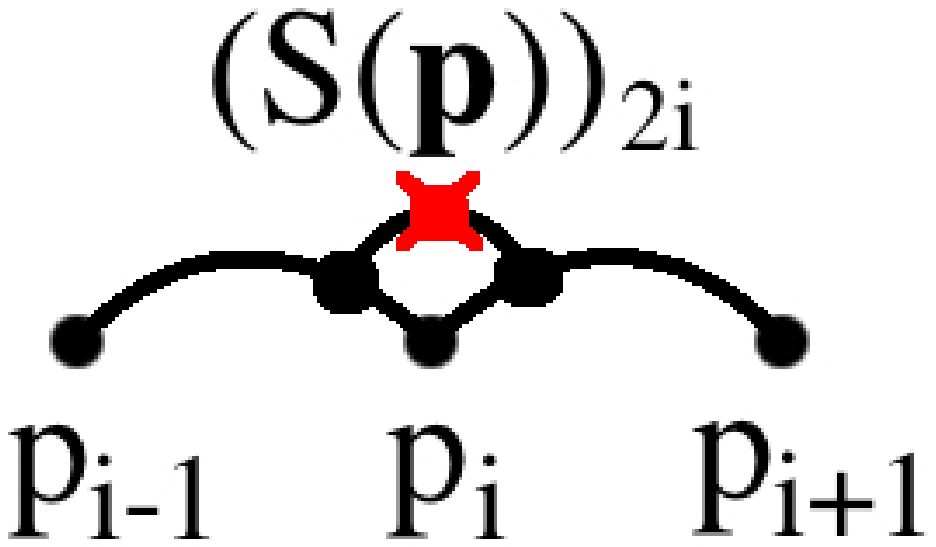}}
\subfloat[Distance to be bounded (length of the bright curve)]{\label{fig:CBS_dist2bound}    \includegraphics[width=0.4\textwidth]{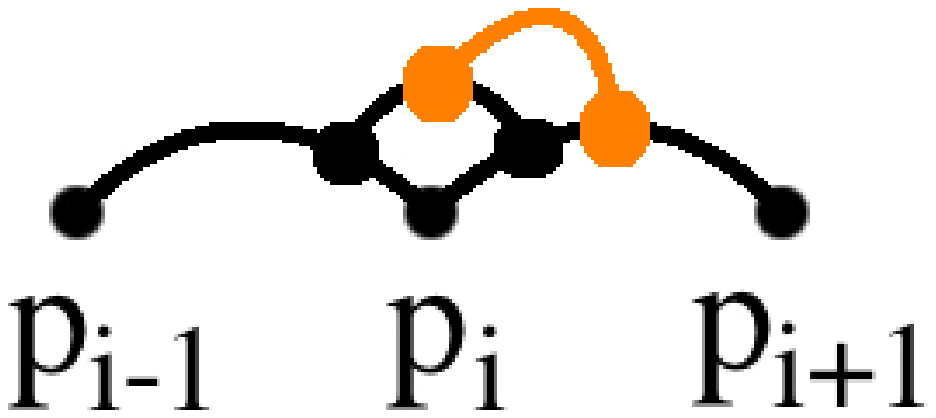}} 
\caption{The adaptation of the cubic B-spline scheme. The geodesic curves connecting two points are represented by curved lines}
\label{fig:CBS}
\end{figure}

Figure~\ref{fig:CBS} shows the refinement rules \eqref{def:Cubic_B_spline_scheme}, and the distance which we aim to bound in order to guarantee a contractivity factor. The explicit form of $\mathcal{T}(\mathbf{p})_{2i}$ is $M_\frac{1}{2} \left(  M_\frac{3}{4}(p_{i-1},p_i),M_\frac{1}{4}(p_{i},p_{i+1}) \right)$. $\mathcal{T}(\mathbf{p})_{2i}$ is depicted schematically in Figure~\ref{fig:CBS_even_refined} and $\mathcal{T}(\mathbf{p})_{2i+1}$ in Figure~\ref{fig:CBS_odd_refined}. To bound the refined distance $d(\mathcal{T}(\mathbf{p})_{2i},\mathcal{T}(\mathbf{p})_{2i+1})$ we first obtain
\[ d(\mathcal{T}(\mathbf{p})_{2i+1},M_\frac{1}{4}(p_{i},p_{i+1})) = d(M_\frac{1}{2}(p_i,p_{i+1}),M_\frac{1}{4}(p_{i},p_{i+1})) = \frac{1}{4}d(p_{i},p_{i+1}), \]
as both points in the intermediate distance are on the same geodesic. Thus, we have by the metric property of $M_t$ (see Figure~\ref{fig:CBS_dist2bound}),
\begin{eqnarray*}
d(\mathcal{T}(\mathbf{p})_{2i},\mathcal{T}(\mathbf{p})_{2i+1}) &\le & d(\mathcal{T}(\mathbf{p})_{2i}, M_\frac{1}{4}(p_{i},p_{i+1}) ) + d(M_\frac{1}{4}(p_{i},p_{i+1}), \mathcal{T}(\mathbf{p})_{2i+1} ) \\
													&\le & \frac{1}{2} d(M_\frac{3}{4}(p_{i-1},p_i),M_\frac{1}{4}(p_{i},p_{i+1})) + \frac{1}{4}\delta(\mathbf{p})  \\
													& \le & \frac{1}{2} \left( d(M_\frac{3}{4}(p_{i-1},p_i),p_i) + d(p_i,M_\frac{1}{4}(p_{i},p_{i+1}))  \right) + \frac{1}{4}\delta(\mathbf{p})  \\
													& \le & \left[ \frac{1}{2} \left( \frac{1}{4} + \frac{1}{4}\right) + \frac{1}{4} \right] \delta(\mathbf{p}) = \frac{1}{2}\delta(\mathbf{p}) .
\end{eqnarray*}
By symmetry we have the same result for $d(\mathcal{T}(\mathbf{p})_{2i-1},\mathcal{T}(\mathbf{p})_{2i}) $ and therefore a contractivity factor $\mu = \frac{1}{2}$ is obtained.

The last B-spline scheme considered in this example is the quartic B-spline ($m=4$). The adapted scheme is
\begin{equation} \label{eqn:quartic_B_spline_1}
\mathcal{T}(\mathbf{p})_{2i} =  \rba{3}{(p_i,p_{i+1},p_{i-1})}{( \frac{10}{16},\frac{5}{16},\frac{1}{16})} = M_\frac{15}{16} \left( p_{i-1}, M_\frac{1}{3}(p_{i},p_{i+1})   \right) ,
\end{equation}
and
\begin{equation} \label{eqn:quartic_B_spline_2}
\mathcal{T}(\mathbf{p})_{2i+1} = \rba{3}{  (p_{i+1},p_i,p_{i+2})}{(\frac{10}{16},\frac{5}{16},\frac{1}{16})}= M_\frac{15}{16} \left( p_{i+2},  M_\frac{2}{3}(p_{i},p_{i+1})  \right) .
\end{equation}
The contractivity analysis is presented in Appendix \ref{appendix:quartic_Bspline}, where a contractivity factor $\mu =\frac{5}{6}$ is established.

Recall that by Theorem \ref{thm:local_convergence_by_contractivity}, for all the B-spline schemes presented in this example, the contractivity implies convergence.
\end{example}

Example \ref{exm:B_splines} presents the analysis of the adaptation of the first four B-spline schemes. This analysis results in the convergence of the adapted schemes. Nevertheless, similar to the interpolatory case, the above analysis fails to obtain contractivity for schemes with a mask of large support. Indeed, for the quintic B-spline ($m=5$) we did not achieve a contractivity factor. Since most applicable, popular linear schemes have masks of relatively small supported, we are encouraged to construct their GIM-schemes and to analyze their convergence by the tools presented in this section.


\bibliography{manifold_GIM_bib}
\bibliographystyle{plain} 


\appendix  \label{appendix}

\section{Proofs of contractivity}  


\subsection{The contracivity of the adapted \texorpdfstring{$6$}--point scheme} \label{appendix:6pt}

Similar to the analysis of the $4$-point scheme of Example \ref{exm:maifold_4pt}, we aim to bound 
 \[ d(\mathcal{T}(\mathbf{p})_{2i+1},\mathcal{T}(\mathbf{p})_{2i}) .\]  
By Definitions \ref{def:non_sym_avg} and \ref{def:sym_avg} we have that $\srba{6}{(p_{i-2},\ldots,p_{i+3}  )}{\mathbf{w}} = M_\frac{1}{2}(M^1,M^2) $ where
\begin{equation*}  
M^1 = \rba{3}{(p_{i},p_{i-2},p_{i-1})}{\frac{1}{2^8}(150,3,-25)} =  M_{-25/128}( M_{3/153}(p_{i},p_{i-2}),p_{i-1} ) , 
\end{equation*}
and
\[ M^2 = \rba{3}{(p_{i+1},p_{i+3},p_{i+2})}{\frac{1}{2^8}(150,3,-25)} =  M_{-25/128}( M_{3/153}(p_{i+1},p_{i+3}),p_{i+2} )  .\]
Then, by the triangle inequality and the metric property we get from \eqref{def:manifold_6pt}
\begin{eqnarray*}
 d(\mathcal{T}(\mathbf{p})_{2i+1},\mathcal{T}(\mathbf{p})_{2i}) &\le & d(\mathcal{T}(\mathbf{p})_{2i+1},M^1) + d(M^1,p_{i}) \\
 & \le & \frac{1}{2}d(M^1,M^2) +  d(M^1,p_{i}) \\
 & \le & \frac{1}{2} \left( d(M^1,p_i) + d(p_i,p_{i+1}) + d(p_{i+1},M^2) \right) +  d(M^1,p_{i}) 
\end{eqnarray*}
which leads to
\begin{equation} \label{eqn:6pt_analysis_bound}
d(\mathcal{T}(\mathbf{p})_{2i+1},\mathcal{T}(\mathbf{p})_{2i}) \le \frac{1}{2}\delta(\mathbf{p}) + \frac{3}{2} d(M^1,p_i)  + \frac{1}{2} d(p_{i+1},M^2) .
\end{equation}
Now, $d(M^1,p_i) \le d(M^1,M_{3/153}(p_{i},p_{i-2} ) )+ d(M_{3/153}(p_{i},p_{i-2}),p_i )$, and by the metric property
\begin{equation} \label{eqn:6pt_first_avg_bnd}
d(M_{3/153}(p_{i},p_{i-2}),p_i ) \le \frac{3}{153}2\delta(\mathbf{p}) . 
\end{equation}
For the other distance we have
\[d(M^1,M_{3/153}(p_{i},p_{i-2}) ) = \frac{25}{128} d(p_{i-1},M_{3/153}(p_{i},p_{i-2}) )  , \]
and in view of \eqref{eqn:6pt_first_avg_bnd} 
\[  d(p_{i-1},M_{3/153}(p_{i},p_{i-2}) ) \le d(p_{i-1},p_{i}  ) + d(p_{i},M_{3/153}(p_{i},p_{i-2}) ) \le \delta(\mathbf{p}) + \frac{2}{51} \delta(\mathbf{p}). \]
Thus, 
\[   d(M^1,p_i) \le  (\frac{25}{128} \cdot \frac{53}{51}  + \frac{2}{51} )\delta(\mathbf{p})  < 0.2422 \delta(\mathbf{p}) . \]
By symmetry we also have $d(M^2,p_{i+1}) < 0.2422 \delta(\mathbf{p})$. Hence, 
\[ \frac{3}{2} d(M^1,p_i)  + \frac{1}{2} d(p_{i+1},M^2) < 2 \cdot 0.2422 \delta(\mathbf{p}) = 0.4844 \delta(\mathbf{p}) , \]
The contractivity factor $0.9844$ is revealed by using \eqref{eqn:6pt_analysis_bound}.


\subsection{The contracivity of the adapted quartic B-spline scheme} \label{appendix:quartic_Bspline}

In the case of the quartic B-Spline we have to bound the two distances $d\left( \mathcal{T}(\mathbf{p})_{2i}, \mathcal{T}(\mathbf{p})_{2i+1} \right)$ and $d\left(\mathcal{T}(\mathbf{p})_{2i-1} , \mathcal{T}(\mathbf{p})_{2i} \right)$ separately since no symmetry can be used here. The two bounds are obtained from \eqref{eqn:quartic_B_spline_1} and \eqref{eqn:quartic_B_spline_2} by the triangle inequality and the metric property \eqref{eqn:metric_property},
\begin{eqnarray*}
d( \mathcal{T}(\mathbf{p})_{2i}, \mathcal{T}(\mathbf{p})_{2i+1} ) &\le & d( \mathcal{T}(\mathbf{p})_{2i}, M_\frac{1}{3}(p_{i},p_{i+1}) ) + d(M_\frac{1}{3}(p_{i},p_{i+1}),M_\frac{2}{3}(p_{i},p_{i+1}) ) \\
&+&  d( M_\frac{2}{3}(p_{i},p_{i+1})  ,  \mathcal{T}(\mathbf{p})_{2i+1} ) \\
&\le & \frac{1}{16} d(p_{i-1}, M_\frac{1}{3}(p_{i},p_{i+1}) ) + \frac{1}{3} \delta(\mathbf{p}) + \frac{1}{16}  d( M_\frac{2}{3}(p_{i},p_{i+1})  ,  p_{i+2} ) \\
													&\le & \frac{1}{16} \cdot \frac{4}{3} \delta(\mathbf{p}) + \frac{1}{3} \delta(\mathbf{p}) + \frac{1}{16}\cdot \frac{4}{3} \delta(\mathbf{p}) = \frac{1}{2} \delta(\mathbf{p}) .
\end{eqnarray*}
And
\begin{eqnarray*}
d(\mathcal{T}(\mathbf{p})_{2i-1} , \mathcal{T}(\mathbf{p})_{2i} ) &\le & d(\mathcal{T}(\mathbf{p})_{2i-1}, M_\frac{2}{3}(p_{i-1},p_{i} ) + d(M_\frac{2}{3}(p_{i-1},p_{i}) ,p_i)  \\
&+ &    d(p_i,M_\frac{2}{3}(p_{i},p_{i+1})) + d(M_\frac{2}{3}(p_{i},p_{i+1}),\mathcal{T}(\mathbf{p})_{2i}) \\
&\le & 2( \frac{1}{16} \cdot \frac{4}{3} \delta(\mathbf{p})  + \frac{1}{3} \delta(\mathbf{p}) ) =  \frac{5}{6} \delta(\mathbf{p}) .
\end{eqnarray*}
Thus, the contractivity factor is $\mu =\frac{5}{6}$.


\end{document}